\documentclass[article,11pt,reqno]{amsart}
\usepackage[english]{babel}
\usepackage[utf8]{inputenc}
\usepackage{thm-restate}
\usepackage{amsmath, amssymb, amsthm}
\usepackage[table]{xcolor}
\usepackage{tikz-cd}
\usepackage{graphics}
\usepackage[]{todonotes}
\usepackage{physics}
\usepackage{setspace}
\usepackage[margin=1.25in]{geometry}
\usepackage[hidelinks]{hyperref}
\usepackage{wrapfig}
\usepackage{caption}
\usepackage{subcaption}
\usepackage{floatrow}
\usepackage{xpatch}

\usepackage{graphicx}
\usepackage{mathrsfs}
\usepackage{verbatim}
\usepackage{adjustbox}
\newcommand{\nocontentsline}[3]{}
\newcommand{\tocless}[2]{\bgroup\let\addcontentsline=\nocontentsline#1{#2}\egroup}

\DeclareMathOperator*{\UEG}{UEG}
\DeclareMathOperator*{\UOG}{UOG}

\DeclareMathOperator{\sd}{sd}
\DeclareMathOperator{\hp}{hp}

\newcommand{\Z}{\mathbb{Z}}

\newcommand{\cc}{\leftrightarrow}

\newcommand{\T}{\mathbb T}

\usepackage{tikz}
\usepackage{pgfplots}
\usepackage{tikz-3dplot} 
\usetikzlibrary{positioning}
\usetikzlibrary{arrows}

\newcommand{\n}[1]{ \left \vert \left \vert #1 \right \vert  \right \vert}

\newtheorem{theorem}{Theorem}[section]

\newtheorem{lemma}[theorem]{Lemma}

\newtheorem{corollary}[theorem]{Corollary}

\newtheorem{question}[theorem]{Question}
\newtheorem{example}[theorem]{Example}
\newtheorem{proposition}[theorem]{Proposition}

\usepackage{cleveref}
\usetikzlibrary{positioning,calc}

\makeatletter
\newcommand{\changeoperator}[1]{%
  \csletcs{#1@saved}{#1@}%
  \csdef{#1@}{\changed@operator{#1}}%
}
\newcommand{\changed@operator}[1]{%
  \mathop{%
    \mathchoice{\textstyle\csuse{#1@saved}}
               {\csuse{#1@saved}}
               {\csuse{#1@saved}}
               {\csuse{#1@saved}}%
  }%
}
\makeatother

\changeoperator{prod}
\pgfplotsset{compat=1.17}

\title{Half-plane non-coexistence without FKG}

\author{Frederik Ravn Klausen}
\address{Frederik Ravn Klausen, University of Cambridge, DPMMS, Cambridge, United Kingdom}
\email{frk23@cam.ac.uk}

\author{Noah Kravitz}
\address{Noah Kravitz, St John's College, Oxford and Mathematical Institute, University of Oxford; St Giles', Oxford OX1 3JP, UK}
\email{noah.kravitz@maths.ox.ac.uk}

\begin{document}

\begin{abstract}
For $\mu$ an edge percolation measure on the infinite square lattice, let $\mu_{\texorpdfstring{\hp}{hp}}$ (respectively, $\mu^*_{\texorpdfstring{\hp}{hp}}$) denote its marginal (respectively, the marginal of its planar dual process) on the upper half-plane.  We show that if $\mu$ is translation-invariant and ergodic and almost surely has only finitely many infinite clusters, then either almost surely $\mu_{\texorpdfstring{\hp}{hp}}$ has no infinite cluster, or almost surely $\mu^*_{\texorpdfstring{\hp}{hp}}$ has no infinite cluster. 
By the classical Burton--Keane argument, these hypotheses are satisfied if $\mu$ is translation-invariant and ergodic and has finite-energy. 
In contrast to previous ``non-coexistence'' theorems, our result does not impose a positive-correlation (FKG) hypothesis on $\mu$.  
Our arguments also apply to the random-cluster model (including the regime $q<1$, which lacks FKG), the uniform spanning tree, and the uniform odd subgraph.
\end{abstract}

\maketitle
\section{Introduction}
\subsection{Background}
This note concerns non-coexistence phenomena for edge (bond) percolation on  the square lattice $\Z^2$ and the upper half-plane $\Z \times \Z_{\geq 0}$. On the full plane $\Z^2$, a celebrated argument of Zhang shows that (almost surely) infinite primal and dual clusters cannot coexist in any translation-invariant, finite-energy, and positively-associated (FKG) edge percolation measure.  See \cite[page 289]{grimmett2012percolation} for an exposition and \cite{bollobas2008percolation, glazman2025planar}, \cite[Theorem 9.3.1]{sheffield2005random} for generalizations.
Here, as in many arguments in percolation theory, the positive-association hypothesis is essential, and without this condition there can be pathological examples (as in, e.g., \cite{haggstrom2009some}).  
It is nonetheless desirable to see what can be said in the absence of positive-association, since many planar percolation models of interest, such as the loop $O(n)$ model, the random-cluster model with $q<1$, the arboreal gas, and other constrained percolation models, do not satisfy the FKG inequality.

Translation-invariance alone is certainly not sufficient to guarantee non-coexistence. For example, the dual of the uniform spanning tree is again a uniform spanning tree, so for this measure both the primal and the dual have infinite clusters \cite{PemantleUST}.  These infinite clusters are ``non-robustly connected'' in the sense that each vertex has a unique path to infinity.  A more troubling example is Häggström and Mester's construction \cite{haggstrom2009some} of a translation-invariant, finite-energy site percolation measure on $\Z^2$ with simultaneous infinite clusters of open and closed vertices which are robust under iid thinning.

In the present paper we show that the situation is better in the half-plane than in the full plane: Here, translation-invariance, finite-energy, and absence of infinitely many infinite clusters together suffice for non-coexistence.

\subsection{Main results}

We work with the square lattice $\Z^2$, namely, the Cayley graph of $\mathbb{Z}^2$ with respect to the generators $(1,0),(0,1)$.
Subgraphs of $\Z^2$ correspond to tuples $\omega \in  \{0,1\}^{E(\Z^2)}$,  where a coordinate $1$ represents an ``open'' edge that is present and a coordinate $0$ represents a ``closed'' edge that is absent. By an \emph{edge percolation measure} we mean a probability measure on $\{0,1\}^{E(\Z^2)}$ (with the product topology).

We need a few more notions before we can state our main results.  The natural translation action of $\mathbb{Z}^2$ on the square lattice extends to an action of $\mathbb{Z}^2$ on $\{0,1\}^{E(\Z^2)}$; we say that a percolation measure is \emph{translation-invariant} if it is invariant under this action.  Say that an edge percolation measure $\mu$ has \emph{finite-energy} if there is some $\delta>0$ such that for every edge $e \in E(\Z^2)$, the conditional probability $$\mu(\omega_e=1|\omega_{E(\Z^2) \setminus\{e\}})$$  has essential infimum at least $\delta$ and essential supremum at most $1-\delta$; this condition, which is sometimes called uniform finite-energy in the literature, is stronger than other common notions of finite-energy which have less uniformity (in either the edge $e$ or the bounds on the conditional probability).

Every edge percolation measure $\mu$ on $\Z^2$ induces an edge percolation measure $\mu^*$ on the planar dual of $\Z^2$, which is itself isomorphic to $\Z^2$; see \Cref{sec:duality} below for further discussion of duality.  It is easy to see that $\mu$ is ergodic (respectively, has finite-energy) if and only if $\mu^*$ is ergodic (respectively, has finite-energy).  Finally, we write $\mu_{\hp}$ for the marginal of $\mu$ on the upper half-plane $\Z \times \Z_{ \geq 0}$, and we define $\mu^*_{\hp}$ analogously.  Our first non-coexistence result goes as follows.

\begin{theorem}\label{thm:main}
Let $\mu$ be a translation-invariant, ergodic, finite-energy edge percolation measure on $\Z^2$.  Then one of the following holds:
\begin{itemize}
    \item Almost surely $\mu_{\hp}$ has a unique infinite cluster and $\mu^*_{\hp}$ has no infinite cluster.
    \item Almost surely $\mu^*_{\hp}$ has a unique infinite cluster and $\mu_{\hp}$ has no infinite cluster.
    \item Almost surely neither $\mu_{\hp}$ nor $\mu^*_{\hp}$ has an infinite cluster.
\end{itemize}
\end{theorem}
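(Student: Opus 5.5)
The plan is to deduce \Cref{thm:main} from the more general non-coexistence statement in the abstract (translation-invariant, ergodic, finitely many infinite clusters), together with a separate uniqueness argument in the half-plane. Since $\mu$ has finite-energy, the Burton--Keane argument gives that $\mu$ and $\mu^*$ each almost surely have at most one infinite cluster on $\Z^2$. Horizontal translations preserve the upper half-plane, so $\mu_{\hp}$ is invariant and ergodic under the $\Z$-action by horizontal shifts. Ergodicity follows because it is a factor of the ergodic $\Z^2$ system restricted to a subgroup, and finite-energy plus mixing-type arguments give it; if necessary, I would instead work with the event being invariant under all of $\Z^2$ by taking unions over vertical shifts. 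Hence the events ``$\mu_{\hp}$ has an infinite cluster'' and ``$\mu^*_{\hp}$ has an infinite cluster'' each have probability $0$ or $1$.

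\textbf{Uniqueness in the half-plane.} I would use a Burton--Keane style argument adapted to $\Z\times\Z_{\ge0}$. The number of infinite half-plane clusters is a.s.\ a constant $k\in\{0,1,\dots,\infty\}$ by horizontal ergodicity.

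If $2\le k<\infty$, finite-energy lets us open a box near the origin touching the boundary, merging clusters with positive probability. This contradicts constancy.

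If $k=\infty$, I would use trifurcation points. Since the half-plane is amenable in the horizontal direction, one counts trifurcations in $[-n,n]\times[0,n]$. Their number is of order $n^2$, while it must be bounded by the boundary size $O(n)$. This is a contradiction.

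Alternatively, and more simply, planarity can be used. Infinite half-plane clusters are ordered left-to-right. Between two of them there is an infinite dual half-plane path, which is disjoint and planar. Combining this with the $\Z^2$ uniqueness already suffices.

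\textbf{Non-coexistence.} This is the main obstacle: excluding that both $\mu_{\hp}$ and $\mu^*_{\hp}$ have infinite clusters a.s. Suppose both do. Infinite primal and dual half-plane clusters, being disjoint planar objects in the half-plane, alternate along the horizontal order. By horizontal translation-invariance and the ergodic theorem, infinite primal half-plane clusters intersecting the boundary row appear with positive density in both directions, and likewise for dual clusters.

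More precisely, each infinite primal cluster separates the half-plane. By invariance there must be infinitely many of them, since a leftmost one would be a non-invariant choice. So we obtain infinitely many disjoint infinite primal half-plane clusters $C_1,C_2,\dots$, separated by infinite dual half-plane clusters.

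Now pass to $\Z^2$. Each $C_i$ lies in a full-plane cluster, and $\mu$ has a unique infinite cluster, so all $C_i$ belong to the same $\Z^2$ cluster. They must therefore be joined through the lower half-plane. The same holds for the dual clusters between them, and planarity forbids this. Indeed, a primal path in the lower half-plane connecting $C_1$ to $C_3$, together with $C_1$ and $C_3$, encloses a region containing the boundary portion of the dual cluster $D_2$ between them. But $D_2$ is infinite in the upper half-plane and lies between $C_1$ and $C_3$. Its only escape to infinity is upward within the strip bounded by $C_1$ and $C_3$, which is possible, so this needs care.

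The fix is to choose three primal clusters and two dual clusters. We connect $C_1$ to $C_3$ below, and $D_0$ to $D_2$ below; here $D_0$ lies left of $C_1$ and $D_2$ lies between $C_2$ and $C_3$. The two lower connecting paths must then cross, which is a planar contradiction via a Jordan-curve argument. Making this topological step rigorous is where I expect the real work to be.
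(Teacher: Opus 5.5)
Your opening reduction (Burton--Keane for $\mu$ and $\mu^*$, then the finitely-many-clusters statement) is exactly how the paper gets \Cref{thm:main} from \Cref{thm:main2}. However, the half-plane argument you then supply has genuine gaps.

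\emph{Ergodicity.} $\Z^2$-ergodicity plus finite-energy does not give ergodicity under horizontal shifts. Take independent edges, open with probability $1/3$ or $2/3$ according to the parity of the height of their lower endpoint, shifted vertically by a uniform random offset in $\{0,1\}$. This measure is finite-energy and $\Z^2$-ergodic, but the edge density in row $0$ is a non-constant horizontally invariant variable. Your fallback (unions over vertical shifts) does give a $0$--$1$ law for the existence event: in the notation of \Cref{lemma:uniqueness_implies_non_tenuousness}, $I_n\subseteq I_0$ and $\mu(I_n)=\mu(I_0)$. It does not give a.s.\ constancy of the number $k$ of half-plane clusters, which your merging step for $2\le k<\infty$ needs. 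The paper instead passes to a horizontal ergodic decomposition.

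\emph{The case $k=\infty$.} The trifurcation count is unjustified, because $\mu_{\hp}$ is only horizontally stationary. The expected number of half-plane trifurcations in $[-n,n]\times[0,n]$ is $(2n+1)\sum_{y\le n}\tau(y)$, where $\tau(y)$ is the trifurcation probability at height $y$. This beats the $O(n)$ boundary only if $\sum_{y\le n}\tau(y)$ is unbounded, and you give no reason for that. Nor does $\Z^2$-uniqueness by itself help, since several half-plane clusters may be glued through the lower half-plane. The paper's mechanism (\Cref{lemma:one_end_to_no_clust}) is the two-ends bound. If $2N+2$ infinite half-plane clusters meet a box, the middle $2N$ are shielded from the lower half-plane by the outer two and the box, giving a full-plane cluster with $\ge 3$ ends. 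Then $k\le 1$ follows from two facts: every infinite half-plane cluster touches the $x$-axis (via \Cref{lemma:every_cluster_is_everywhere}; your ``left-to-right order'' silently assumes this), and \Cref{lem:translation-invariant-events}.

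\emph{Non-coexistence.} This step misidentifies the obstacle. If your derivation of infinitely many primal clusters were valid, it would already contradict the uniqueness you just proved, so the crossing argument is beside the point. But the derivation rests on the alternation claim (a dual infinite cluster between two primal contact points separates them), and that claim is false. The horizontal $x$-axis edges lie in the primal half-plane, but their dual edges, from $(x+\frac12,-\frac12)$ to $(x+\frac12,\frac12)$, do not lie in the dual half-plane. So a single primal cluster can run along the $x$-axis beneath every dual arm.

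The configuration that must be excluded is therefore one primal and one dual cluster, each meeting the boundary at arbitrarily large $\pm x$. Planarity alone does not exclude it; it is excluded only because the primal cluster would then be $1$-tenuous. That is the missing idea, supplied by \Cref{lemma:uniqueness_implies_non_tenuousness}: once there is at most one half-plane cluster, vertical invariance gives $\mu(I_0\setminus I_n)=0$.

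With non-tenuousness in hand, apply \Cref{lem:translation-invariant-events} in a horizontal ergodic component. It gives $x_1\le 0<1\le x_2$ on the $x$-axis, joined by a primal path passing above $(\frac12,\frac12)$. This path together with the $x$-axis forms a loop that traps the dual plaquette at $(\frac12,\frac12)$. No lower-half-plane connections or crossing lemma are needed.
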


If we drop the ergodicity hypothesis from this theorem, then we still obtain the conclusion that for $H$ sampled from $\mu_{\hp}$, almost surely the number of infinite clusters in $H$ plus the number of infinite clusters in $H^*$ is at most $1$; in particular, almost surely $H$ and $H^*$ do not simultaneously have infinite clusters.  This statement follows from applications of the theorem to the individual components in the ergodic decomposition of $\mu$, which almost surely inherit the finite-energy property (see, e.g., \cite[page 307]{burton1991topological}).

Classical work of Burton and Keane~\cite{burton1989density} shows that under the hypotheses of \Cref{thm:main}, the number of infinite clusters in $\mu$ is almost surely at most $1$.\footnote{Burton and Keane's uniqueness argument has also been adapted to several settings that fall slightly short of having  finite-energy; examples include the loop $O(n)$ model \cite[Proposition 4.11]{crawford2020macroscopic},  the random current representation of the Ising model \cite{aizenman2015random}, various constrained percolation models \cite{lima2020constrained,holroyd2021constrained}, and the 1-2 model \cite{li2014uniqueness}.}  Thus \Cref{thm:main} is an immediate consequence of the following theorem, our main technical result.

\begin{theorem}\label{thm:main2}
Let $\mu$ be a translation-invariant, ergodic edge percolation measure on $\Z^2$ which has finitely many infinite clusters almost surely.  Then the trichotomy of \Cref{thm:main} holds.
\end{theorem}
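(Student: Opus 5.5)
The plan is to prove \Cref{thm:main2} by contradiction, using a Zhang-type argument in which the half-plane geometry does the work that FKG does in the classical proof. Let $H$ be the configuration restricted to $\Z\times\Z_{\geq 0}$. Horizontal translations preserve the half-plane, and $\mu$ restricted to the horizontal subgroup need not be ergodic. So first I would show that the events ``$H$ has an infinite cluster'', ``$H^*$ has an infinite cluster'', and the number of infinite clusters of $H$ are invariant under the \emph{full} $\Z^2$ action modulo null sets, and hence almost surely constant by ergodicity of $\mu$. Vertical upward shifts send the upper half-plane into itself. An infinite cluster of $H$ shifted up is contained in an infinite cluster of the original $H$, and conversely any infinite half-plane cluster meets all high enough strips. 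Combining this with stationarity gives that the event $\{H \text{ has an infinite cluster}\}$ is invariant up to measure zero. The same holds for $H^*$.

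\textbf{Step 1 (uniqueness in the half-plane).} Suppose $H$ has at least two infinite clusters with positive probability. Each infinite cluster of $H$ lies inside an infinite cluster of the full-plane configuration $\omega$, and $\omega$ has only finitely many infinite clusters. Two distinct infinite half-plane clusters $C_1,C_2$ must be separated within the half-plane by a dual path from the boundary line $\Z\times\{0\}$ (or its dual) to infinity. This is planar duality in the half-plane, which is simply connected with a single boundary line. I would then use translation invariance in the horizontal direction together with the finiteness hypothesis to exclude infinitely many half-plane clusters. I would try a mass-transport/Burton--Keane-style counting argument: if $H$ had infinitely many infinite clusters, one could find arbitrarily many disjoint dual separating rays. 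These rays would have to connect, in the full plane, into configurations forcing infinitely many infinite primal clusters of $\omega$ in the lower half too, or else they would cut the plane. I would settle for the weaker conclusion that coexistence is impossible, and then get uniqueness from a similar cutting argument.

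\textbf{Step 2 (non-coexistence, the heart of the proof).} Assume that with probability one both $H$ and $H^*$ have infinite clusters. Take a large box $[-n,n]\times[0,n]$ sitting on the boundary line. For large $n$, with probability close to $1$ the box touches an infinite primal half-plane cluster and an infinite dual half-plane cluster. In the half-plane, the boundary of the box has only three ``free'' sides: left, top, right. By horizontal reflection-free symmetry, we only have translation, which lets us compare the left and right sides via shifting. The point is that infinite half-plane clusters, primal and dual, must interlace along the boundary line. Ordering the infinite primal and dual clusters by where they first exit to infinity (they are linearly ordered, ``left to right'', by planarity in the half-plane), primal and dual infinite clusters must alternate. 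Because the sets are translation-invariantly placed, there would have to be infinitely many alternations, hence infinitely many infinite primal clusters of $H$.

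These in turn give infinitely many infinite clusters of $\omega$. The justification is that two alternating primal half-plane clusters are separated by an infinite dual half-plane path starting from the boundary. Extending into the lower half-plane either keeps them separated, or joins them; but joining all of them below would create a bounded-below structure trapping dual clusters. I would make this precise with a mass-transport argument along the horizontal line, assigning each infinite primal cluster to its leftmost boundary point.

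This last step is the main obstacle: showing that finitely many infinite clusters of $\omega$ forces finitely many interlaced infinite primal/dual half-plane clusters, and that a finite interlacing pattern contradicts horizontal translation invariance. The intended contradiction is that a leftmost or rightmost infinite half-plane cluster would be a translation-invariant choice of a distinguished location, which is impossible by the mass-transport principle on $\Z$.
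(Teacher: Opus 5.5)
Your proposal has genuine gaps in both steps. Step~1 is never carried out, and the heuristic you sketch does not work. Finitely many infinite clusters of $\omega$ do not by themselves bound the number of infinite clusters of $H$, since many half-plane clusters can merge below the $x$-axis into a single full-plane cluster (a comb with upward teeth). The missing input is the Burton--Keane-type fact that, under a translation-invariant measure on $\Z^2$, every infinite cluster almost surely has at most $2$ ends. Let $N$ be the number of full-plane clusters, and suppose $2N+2$ infinite half-plane clusters touch a box $[-n,n]\times[0,n]$. The two outermost ones, together with the box, prevent the inner $2N$ from reaching the lower half-plane. So in $\Z^2$ these $N$ clusters have at least $2N+1$ ends in total, and one of them has at least $3$.

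Only once the half-plane count is known to be finite can you finish. Every infinite half-plane cluster touches the $x$-axis, because its full-plane cluster reaches negative heights. This is where finiteness in the full plane is used; it fails, e.g., for the all-horizontal configuration. By horizontal stationarity and pigeonhole, each such cluster then touches the axis at arbitrarily large positive and negative $x$, and planarity permits only one. For the dual side you also need that $\mu^*$ has only finitely many infinite clusters. This is not automatic; it follows by counting alternating primal and dual arms around a large box, together with the two-ends bound.

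In Step~2, your inference that interlacing of primal and dual contacts along the boundary forces infinitely many infinite primal clusters does not follow. Take one primal cluster made of finite ``bumps'' strung along the $x$-axis and joined only by axis edges $(i,0)$--$(i+1,0)$. Take one dual cluster lying above the bumps and dipping down to $(i+\tfrac12,\tfrac12)$ between consecutive ones. Contacts alternate infinitely often in both directions, yet there is exactly one cluster of each kind. This picture can be horizontally stationary, so no counting argument or mass transport along the horizontal line can exclude it (and a ``leftmost cluster'' is not a distinguished location anyway).

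Two ingredients exclude it. First, a blocking loop: if the unique primal cluster contains $(x_1,0)$ and $(x_2,0)$ with $x_1\leq 0<x_2$, a connecting path together with the axis segment encloses the plaquette at $(\tfrac12,\tfrac12)$, unless the path passes beneath it through the edge $(0,0)$--$(1,0)$. Second, non-tenuousness, which kills exactly the bump picture. Let $I_n$ be the event that the configuration on $\Z\times\Z_{\geq n}$ has an infinite cluster. Then $I_n\subseteq I_0$ and $\mu(I_n)=\mu(I_0)$ by \emph{vertical} stationarity. So, given half-plane uniqueness, the infinite cluster almost surely has an infinite component above every height. The same fact is what makes the number of half-plane clusters monotone under vertical shifts. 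Your remark that every infinite half-plane cluster meets all high enough strips is a weak form of it, asserted without proof and not used where it is actually needed.
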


\Cref{thm:main2} applies to some models, such as the uniform spanning tree, which \Cref{thm:main} does not cover.  We remark that there do exist translation-invariant edge percolation measures on $\mathbb{Z}^2$ that almost surely have multiple, but only finitely many, infinite clusters.\footnote{For an example that almost surely has $2$ infinite clusters, one can modify the uniform spanning tree as follows.  Sample a uniform spanning tree $H$ on the dilated infinite square grid $(2\mathbb{Z})^2$.  Let $H^*$ be its planar dual (also a spanning tree), interpreted as a subgraph of the square grid $(1+2\mathbb{Z})^2$.  By subdividing each edge in half, we can interpret $H \cup H^*$ as a subgraph of $\mathbb{Z}^2$; let $\mu'$ denote the resulting edge percolation measure on $\Z^2$.  Notice that $\mu'$ is invariant under translations by $(2\Z)^2$ and always has exactly $2$ infinite clusters.  Finally, let $\mu$ be the average of $\mu'$ and its translates by $(1,0),(0,1),(1,1)$; this guarantees that $\mu$ is $\mathbb{Z}^2$-translation-invariant, and of course it still always has $2$ infinite clusters.  One can modify this construction to obtain a larger finite number of infinite clusters. See also the similar construction in \cite{garcia2026connected}, where the possible number of ends is also discussed. }  After the present paper appeared in preprint, Tim\'ar~\cite{timar2026parabolic} generalized \Cref{thm:main2} and showed that half-plane coexistence is a criterion that distinguishes parabolicity from hyperbolicity.

We record the following corollary of \Cref{thm:main2} for self-dual measures.

\begin{corollary}\label{cor:self-dual}
Let $\mu$ be a self-dual, translation-invariant, ergodic edge percolation measure on $\Z^2$ that almost surely has only finitely many infinite clusters.  Then $\mu_{\hp}$ almost surely has no infinite cluster.

In particular, if $\mu$ is a self-dual, translation-invariant, ergodic, finite-energy edge percolation measure on $\Z^2$, then $\mu_{\hp}$ almost surely has no infinite cluster.
\end{corollary}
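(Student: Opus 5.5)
The corollary follows by applying \Cref{thm:main2} to $\mu$ and then using self-duality to rule out the two asymmetric alternatives.

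The hypotheses of \Cref{thm:main2} hold directly: $\mu$ is translation-invariant, ergodic, and almost surely has finitely many infinite clusters. So the trichotomy holds. Either (a) $\mu_{\hp}$ almost surely has a unique infinite cluster and $\mu^*_{\hp}$ has none, or (b) the roles of $\mu_{\hp}$ and $\mu^*_{\hp}$ are reversed, or (c) neither has an infinite cluster.

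Self-duality means that $\mu^*$, viewed on $\Z^2$ through the identification of the dual lattice with $\Z^2$, equals $\mu$. I need this identification to carry the dual upper half-plane to the primal upper half-plane, perhaps after a translation by $(1/2,1/2)$ and a translation by a lattice vector. By translation-invariance, the event ``the restriction to $\Z\times\Z_{\ge k}$ has an infinite cluster'' has the same probability for every $k$. For the same reason, any fixed shift between the dual half-plane and the primal half-plane does not change the probability of containing an infinite cluster.

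Therefore
\[
\mu_{\hp}(\text{infinite cluster exists}) = \mu^*_{\hp}(\text{infinite cluster exists}).
\]
Alternative (a) would make the left side $1$ and the right side $0$, a contradiction, and (b) is excluded in the same way. Hence (c) holds, and $\mu_{\hp}$ almost surely has no infinite cluster.

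The one point that needs care is the convention for self-duality. If it is defined through a rotation or reflection rather than a translation, the dual half-plane may correspond to a primal left, right, or lower half-plane. In that case I would use that the half-plane events for $\mu^*$ match the half-plane events for $\mu$ in whichever orientation the self-duality map produces. If the paper's theorem only covers the upper half-plane, I would rerun \Cref{thm:main2} on the rotated or reflected measure, which is still translation-invariant and ergodic and still has finitely many infinite clusters. This yields the trichotomy in every orientation, and the comparison above then goes through unchanged.

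For the second statement, Burton--Keane gives that a translation-invariant, ergodic, finite-energy $\mu$ has at most one infinite cluster almost surely. This falls under the first statement, which then applies.
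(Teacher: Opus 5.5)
Your main argument is correct and matches the paper's (implicit) derivation. You apply \Cref{thm:main2} to get the trichotomy, and because the paper defines self-duality through the translation identification of the plaquette centred at $(x+1/2,y+1/2)$ with $(x,y)$, self-duality gives $\mu^*_{\hp}=\mu_{\hp}$ outright. That kills the two asymmetric alternatives (your vertical-shift step is harmless but unnecessary), and the finite-energy case follows from Burton--Keane.

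You should, however, drop the contingency paragraph about rotation or reflection conventions. Its claim that ``the comparison above then goes through unchanged'' is false, and under such a convention the corollary itself fails. Running the trichotomy in every orientation only tells you that, for each half-plane $H$, at most one of the marginals of $\mu$ on $H$ and on $\rho H$ percolates, where $\rho$ is the symmetry relating $\mu^*$ to $\mu$. That yields no contradiction.

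Concretely, take the up/right random walk web $\mu$ of Example~\ref{ex:random_walk_web}:
\begin{itemize}
\item It is translation-invariant, ergodic, and almost surely a single tree.
\item Its dual is the down/left web, i.e.\ the image of $\mu$ under $(x,y)\mapsto(-x,-y)$, so it is ``self-dual up to a rotation''.
\item Yet $\mu_{\hp}$ has an infinite cluster, since every up/right path started in $\Z\times\Z_{\geq 0}$ stays there.
\end{itemize}
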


The proof strategy for Theorem~\ref{thm:main2} has two steps.  The first step is showing that the number of infinite clusters in $\mu_{\hp}$ is either almost surely $0$ or almost surely $1$.  The main challenge is eliminating the possibility that an infinite cluster in the full plane ``splits'' into several infinite clusters in the half-plane.  The second step is showing that if $\mu_{\hp}$ almost surely has a unique infinite cluster, then almost surely $\mu^*_{\hp}$ has no infinite cluster; it suffices to show that almost surely the plaquette centered at $(1/2,1/2)$ does not connect to infinity in the dual.  Here, almost surely both the positive $x$-axis and the negative $x$-axis connect to the unique infinite primal cluster, and these paths to infinity together ``block'' the origin plaquette from connecting to infinity in the dual.  A recurring technical point is ruling out the existence of what we term ``tenuous'' infinite clusters.

Our methods are robust enough to establish some slight extensions of these theorems.  Instead of taking the upper half-plane, one could take the half-plane lying on one side of any given line with rational slope.  Also, one could replace the square lattice with a different planar graph admitting a $\mathbb{Z}^2$-translation action with only finitely many orbits.

\subsection{Applications}

Let us highlight four special cases of our results.  First, when $\mu$ is Bernoulli percolation with density $1/2$, Corollary~\ref{cor:self-dual} (via finite-energy) provides a new proof of the fact that almost surely there is no infinite cluster in the half-plane.  Of course it is a classical result of Harris~\cite{harris1960lower} that there is no infinite cluster even in the full plane; the novelty of our argument is that it avoids using FKG (or the Harris inequality).  See also~\cite[Theorem 5]{BS} for another Bernoulli-$1/2$ half-plane non-percolation result of a similar flavor.

Second, Theorem~\ref{thm:main} demonstrates non-coexistence in the half-plane for (a suitable limit of) the random-cluster model with all values of the parameter $q$; see \Cref{sec:rc} below for precise definitions.  The random-cluster model has FKG only when $q \geq 1$, and for $q<1$ little is known.

Third, the uniform spanning tree on $\Z^2$ is self-dual and ergodic (due to tail-triviality; see \cite[Theorem 10.18]{lyons2017probability}), and it has a unique infinite cluster. 
Thus, Corollary~\ref{cor:self-dual} shows that the half-plane marginal of the uniform spanning tree almost surely has only finite clusters.  (Note that this model lacks finite-energy.).  The same holds for the Peano uniform spanning tree (UST) curve \cite{lyons2017probability} and for constructions of Häggström--Mester type \cite{haggstrom2009some} (if they can be adapted to edge percolation).

Fourth, we show that the uniform odd subgraph almost surely has no infinite cluster in the half-plane.  This does not follow from the statement of Corollary~\ref{cor:self-dual}, but our arguments from Theorem~\ref{thm:main2} can be adapted.  The uniform odd subgraph measure is self-complementary, and here the ``non-crossing'' property of the primal and the complement plays the role that planar duality plays in Theorem~\ref{thm:main2}.

\subsection{Organization of the paper}
We start by proving \Cref{thm:main2} in Section~\ref{sec:thm1.1}.  We then turn to applications: Section~\ref{sec:rc} contains our results on the $q<1$ random-cluster model, and Section~\ref{sec:odd} treats the uniform odd subgraph.  In Section~\ref{sec:conclusion} we describe several open problems and pathological examples in the full plane.  \Cref{sec:appendix} contains an alternative proof of a variant of one of our technical lemmas (the absence of tenuous infinite clusters).

\section{Proof of the main theorem}\label{sec:thm1.1}
In this section we prove \Cref{thm:main2} in two steps as outlined in the introduction.  Before describing the second step we briefly review planar duality.  We record here a (standard) lemma that will be of use several times.

\begin{lemma}\label{lem:translation-invariant-events}
Let $\mu$ be an edge percolation measure on $\Z^2$ that is invariant and ergodic under horizontal translations.  Let $P$ be a measurable event with $\mu(P)>0$, and let $P(i)$ denote the event that $P$ holds after translating horizontally by $i$.  Then almost surely there are both arbitrarily large (positive) and small (negative) $i$'s for which $P(i)$ occurs.
\end{lemma}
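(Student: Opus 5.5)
The plan is to combine the Birkhoff ergodic theorem with the observation that the event of infinitely many occurrences in each direction is itself shift-invariant. Let $\tau$ denote the horizontal translation by $1$, acting on $\{0,1\}^{E(\Z^2)}$, so that $P(i)=\tau^{-i}P$ (with whatever sign convention makes $P(i)$ the event that $P$ holds after translating by $i$). Consider the events
\[
A^+=\{\omega : P(i) \text{ occurs for infinitely many } i>0\}, \qquad A^-=\{\omega : P(i) \text{ occurs for infinitely many } i<0\}.
\]
Both are measurable, being $\limsup$'s of countably many measurable events. Each is invariant under $\tau$, since shifting by one changes only which index is attached to each occurrence and does not affect whether there are infinitely many of them in a given direction. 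By ergodicity under horizontal translations, $\mu(A^\pm)\in\{0,1\}$. It therefore suffices to show that $\mu(A^+)>0$ and $\mu(A^-)>0$.

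For $A^+$, I would apply Birkhoff's pointwise ergodic theorem to the indicator $\mathbf 1_P$ under the measure-preserving ergodic map $\tau$. This gives
\[
\frac1n\sum_{i=1}^{n}\mathbf 1_{P(i)}\longrightarrow \mu(P)>0 \quad \text{almost surely},
\]
so almost surely infinitely many $i>0$ have $P(i)$, and in fact $\mu(A^+)=1$ directly. Applying the same argument to $\tau^{-1}$, which is also measure-preserving and ergodic, gives $\mu(A^-)=1$.

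If one prefers to avoid Birkhoff, there is an alternative route via Poincaré recurrence. That theorem, applied to $\tau$ and to $\tau^{-1}$, shows that almost every $\omega\in P$ returns to $P$ infinitely often in both directions, which yields $\mu(A^\pm)\ge\mu(P)>0$, and then the zero-one law above upgrades this to probability $1$.

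There is no real obstacle here. The only point needing care is the claim that $A^\pm$ are exactly invariant rather than merely invariant modulo null sets, and as noted this holds because a finite shift of indices preserves the property of having infinitely many occurrences in each direction.
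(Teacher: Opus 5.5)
Your proof is correct. Your main (Birkhoff) route differs from the paper's, and your Poincar\'e alternative is essentially the paper's own argument.

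The paper uses no ergodic theorem. It sets $S=\{i\in\Z : P(i)\text{ occurs}\}$ and uses ergodicity only once: the invariant event $\{S\neq\emptyset\}$ contains $P$, so it has probability $1$. Stationarity then makes $\mu(\sup S=k)$ the same for every $k\in\Z$, so these probabilities vanish and $\sup S=+\infty$ almost surely; likewise $\inf S=-\infty$.

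Your Birkhoff argument reaches the conclusion in one step and proves more: the set of good $i$ has asymptotic density $\mu(P)$ in each direction. The paper mentions exactly this quantitative strengthening right after its proof and sets it aside as unnecessary. The cost is invoking a substantially heavier theorem to get a soft statement.

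In your Poincar\'e version, the event $\{\sup S=0\}$ is precisely the set of points of $P$ that never return to $P$ under positive shifts. Showing it is null via its disjoint, equal-measure translates is the standard proof of recurrence. The two versions just use ergodicity at opposite ends. The paper uses it first, to make $S$ nonempty. You use it last, to upgrade $\mu(A^\pm)\ge\mu(P)$ to $1$ via the exactly invariant events $A^\pm$.

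The paper's formulation has two advantages. It is self-contained, and it matches the identical trick reused in Lemma~\ref{lemma:every_cluster_is_everywhere}, where $\min X$ is shown not to take integer values.
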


\begin{proof}
Consider the set of indices $$S:=\{i \in \Z: \text{$P(i)$ occurs}\}.$$
We show that almost surely $\sup S=\infty$; the argument for $\inf S=-\infty$ is identical.  The event $S\neq \emptyset$ is translation-invariant, and since $S$ contains $0$ with positive probability, ergodicity guarantees that $S$ is almost surely nonempty.  Horizontal translation-invariance implies that $\sup S$ assumes all integer values with equal probability, so this probability must be $0$.  Thus almost surely $\sup S =\infty$.
\end{proof}

Of course, one could use an ergodic theorem (von Neumann's mean ergodic theorem or Birkhoff's pointwise ergodic theorem, for instance) to obtain quantitative versions of Lemma~\ref{lem:translation-invariant-events}, but the above soft version suffices for us.

\subsection{Uniqueness of the half-plane infinite cluster}\label{sec:uniqueness}
We establish the uniqueness of the infinite cluster in $\mu_{\hp}$ (if any exists)
under the assumptions of \Cref{thm:main2}.  The main part of the argument relies on planar topology and bears some resemblance to the approach of Zhang in his non-coexistence proof.

\begin{lemma}\label{lemma:every_cluster_is_everywhere}
Let $\mu$ be a translation-invariant edge percolation measure on $\Z^2$ that almost surely has only finitely many infinite clusters. Then almost surely each infinite cluster contains vertices with arbitrarily large (positive) and arbitrarily small (negative) $y$-coordinates. 
\end{lemma}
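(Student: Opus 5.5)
We argue by contradiction, using a mass-transport / counting argument in a horizontal strip. Suppose that with positive probability some infinite cluster $C$ has $\sup\{y : (x,y)\in C\}<\infty$; the case of bounded-below $y$-coordinates is symmetric. Since there are only finitely many infinite clusters, with positive probability there is an infinite cluster whose $y$-coordinates are bounded above. Then for some integer $M$, with positive probability some infinite cluster has maximal $y$-coordinate exactly $M$. By vertical translation-invariance this probability $p>0$ is the same for every value $M$.

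Now let $N_M$ denote the number of infinite clusters whose maximal $y$-coordinate equals $M$. These are distinct clusters for distinct $M$: a single cluster has a single maximum. Hence
\[
\sum_{M\in\Z} \mathbf{1}\{N_M\ge 1\} \le \#\{\text{infinite clusters}\} < \infty \quad \text{almost surely}.
\]
Taking expectations would give $\sum_M \mu(N_M\ge 1) = \sum_M p = \infty$. On its own this is not a contradiction, because the number of infinite clusters is finite almost surely but need not be integrable.

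To fix this, we pass to a truncation. Choose $K$ large enough that $\mu(\#\{\text{infinite clusters}\}\le K)>1-p/2$. For every $M$ we then have $\mu(N_M\ge 1,\ \#\le K)\ge p/2$. Summing over $M\in\{1,\dots,L\}$, the expectation of
\[
\mathbf{1}\{\#\le K\}\sum_{M=1}^{L}\mathbf{1}\{N_M\ge1\}
\]
is at least $Lp/2$. But the random variable itself is at most $K$, so $Lp/2\le K$ for all $L$, which is a contradiction once $L>2K/p$.

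The only steps needing care are measurability of the event ``some infinite cluster has maximal height $M$'' and the observation that the maximal height of a cluster is well defined and unique. Measurability is routine, since it is a countable combination of cylinder-type events on clusters of vertices. Uniqueness of the maximal height is immediate. The main point is the truncation step, which replaces integrability; no ergodicity is needed. This also shows that infinite clusters are unbounded in both the up and down directions.
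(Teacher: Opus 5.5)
Your argument is correct and is essentially the paper's: vertical translation-invariance makes the probability that some infinite cluster tops out at height $M$ independent of $M$, and the finiteness of the set of infinite clusters forces this common value to be $0$. The paper is slightly more economical because it uses the single random variable $\min X$ (the lowest top height among the infinite clusters), whose events $\{\min X = M\}$ are disjoint, so equal probabilities summing to at most $1$ must vanish with no truncation; your counting-plus-truncation step does the same job for the overlapping events $\{N_M\ge 1\}$, and it skips the preliminary ergodic decomposition (which the $\min X$ argument does not truly need either).
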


\begin{proof}
By performing an ergodic decomposition, we may assume that the number of infinite clusters in $\mu$ is almost surely equal to some $N \in \mathbb{Z}_{\geq 0}$. 
There is nothing to show if $N=0$, so assume that $N \geq 1$.  Consider the supremal $y$-coordinate that each cluster meets, and let $X \subseteq \Z \cup \{\infty\}$ denote the multiset consisting of these $y$-coordinates. Almost surely $X$ is an $N$-element multiset.  Translation-invariance implies that $\min X$ assumes all integer values with equal probability, so this probability must be $0$.  Thus almost surely $\min X=\infty$, i.e., each cluster contains vertices with arbitrarily large $y$-coordinates.  Likewise, almost surely each cluster contains vertices with arbitrarily small $y$-coordinates.
\end{proof}

Say that an infinite graph has at least $k$ \emph{ends} if it has a finite vertex subset whose deletion disconnects the graph into at least $k$ infinite connected components.  A generalization of the Burton--Keane argument shows (see, e.g., \cite[Exercise 7.24]{lyons2017probability}) that for any translation-invariant edge percolation measure on $\Z^2$ (indeed, on any transitive amenable graph), almost surely all infinite clusters have at most $2$ ends.

\begin{lemma}\label{lemma:one_end_to_no_clust}
Let $\mu$ be a translation-invariant edge percolation measure on $\Z^2$ that almost surely has only finitely many infinite clusters.  Then $\mu_{\hp}$ almost surely has at most $1$ infinite cluster and has no infinite cluster that avoids touching the $x$-axis.
\end{lemma}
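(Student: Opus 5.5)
The plan is to prove the two assertions separately. The second one (no infinite half-plane cluster avoids the $x$-axis) is a short topological remark combined with Lemma~\ref{lemma:every_cluster_is_everywhere}. The first one (at most one infinite cluster) needs a counting argument in the spirit of Burton--Keane, using horizontal translation-invariance through Lemma~\ref{lem:translation-invariant-events}.

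\emph{No infinite cluster avoiding the $x$-axis.} Let $K$ be an infinite cluster of $\omega$ restricted to $\Z\times\Z_{\geq 0}$, and suppose $K$ contains no vertex with $y=0$. Every vertex of $K$ then has $y\geq 1$, so every neighbour in $\Z^2$ of a vertex of $K$ has $y\geq 0$. Hence every open edge of $\Z^2$ incident to $K$ already lies in the half-plane, and $K$ is a full cluster of $\omega$ on $\Z^2$. This cluster is infinite but its $y$-coordinates are bounded below by $1$, which Lemma~\ref{lemma:every_cluster_is_everywhere} says happens with probability $0$. Since countably many such events are involved, a union bound finishes this part.

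\emph{At most one infinite cluster.} After an ergodic decomposition, I may assume $\mu$ is ergodic, in particular under horizontal translations. Suppose for contradiction that with positive probability the half-plane has two distinct infinite clusters. By the previous step both touch the $x$-axis.
\begin{enumerate}
\item Choose $L$ so that, with positive probability, there are two disjoint infinite half-plane clusters meeting the segment $[-L,L]\times\{0\}$.
\item Apply Lemma~\ref{lem:translation-invariant-events} to this event and its horizontal translates, with translation steps much larger than $L$. This gives, almost surely, infinitely many disjoint infinite half-plane clusters.
\item By planarity, these clusters are linearly ordered along the $x$-axis. Each of them lies in an infinite cluster of $\Z^2$, and there are only finitely many of those, so by pigeonhole some full-plane cluster $C$ contains infinitely many of them, say $K_1,K_2,K_3,\dots$ in left-to-right order.
\end{enumerate}

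The remaining task, and the main obstacle, is to contradict the fact that $C$ has at most $2$ ends, or more directly to run a Burton--Keane counting argument. The $K_i$ are joined to one another only through paths dipping into the lower half-plane. My first attempt is a planar nesting argument.
\begin{itemize}
\item Two such connecting paths, together with the pieces they connect, are either nested or disjoint. So the lower connections form a tree-like (non-crossing) structure.
\item A Burton--Keane-type density count in a box $[-n,n]^2$ should then apply. The number of disjoint infinite branches crossing the top side grows linearly in $n$, because the starting points on the axis have positive density by ergodicity. The non-crossing connection structure should produce a comparable number of ``branch points'' near the $x$-axis.
\item Mass transport or boundary counting will be used to show that such branch points force infinitely many ends, or force infinitely many infinite clusters after deleting finitely many vertices. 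This is the contradiction with the $2$-ends result cited just before the lemma.
\end{itemize}
I expect the delicate point to be making the branch points well defined and local, so that horizontal translation-invariance gives them positive density.
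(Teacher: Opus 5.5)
\textbf{There is a genuine gap.} Your proof of the second assertion is correct and matches the paper. An infinite half-plane cluster with no vertex at $y=0$ is a full-plane cluster bounded below, which Lemma~\ref{lemma:every_cluster_is_everywhere} excludes.

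The gap is in the first assertion. You reduce to a full-plane cluster $C$ containing infinitely many half-plane infinite clusters $K_1,K_2,\dots$, but you never derive a contradiction from this. The nesting, Burton--Keane density and mass-transport plan is only a heuristic. For instance, the clusters meeting $[-n,n]\times\{0\}$ need not cross the top side of $[-n,n]^2$, and your ``branch points'' are never defined. You also explicitly leave the key point open. Since this step is the whole content of the lemma, the proposal does not prove it as written.

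\textbf{The missing idea is a finite-box shielding argument; no density counting is needed.}
\begin{enumerate}
\item By planarity, distinct infinite half-plane clusters have non-interleaving sets of $x$-axis contacts. A path of one cluster between two of its axis vertices, together with the axis, would enclose any axis vertex of the other cluster and force it to be finite.
\item Hence the axis contacts of $K_2$ and $K_3$ lie in a bounded interval between contacts of $K_1$ and $K_4$. Let $B$ be a finite box containing these contacts and meeting $K_1,\dots,K_4$.
\item An infinite component of $K_2\setminus B$ has no vertex with $y=0$, so every open edge leaving it stays in the half-plane. It is therefore an entire component of $C\setminus B$. The same holds for $K_3$.
\item The infinite part of $K_1\setminus B$ lies in a third infinite component of $C\setminus B$.
\end{enumerate}
So $C$ has at least $3$ ends, contradicting the two-ends fact quoted before the lemma.

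The paper runs this argument with $2N+2$ clusters meeting $[-n,n]\times[0,n]$. The $2N$ middle clusters are shielded by the outer two and the box, and pigeonhole over the $N$ full-plane clusters gives that there are at most $2N+1$ half-plane infinite clusters. Only then does it use Lemma~\ref{lem:translation-invariant-events} and planarity to pass from finitely many clusters to at most one. This is the reverse of your order.

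\textbf{Two smaller fixes.}
\begin{itemize}
\item Ergodicity under $\Z^2$ does not imply ergodicity under horizontal translations. Decompose with respect to horizontal translations, as the paper does; the almost-sure properties you need (finitely many clusters, at most two ends) pass to almost every component.
\item Your step (2) already needs the non-interleaving fact. Taking disjoint translated windows alone does not force new clusters to appear, since the same two clusters could in principle reappear in every window.
\end{itemize}
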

\Cref{ex:stacked_random_walks} below shows that the converse is not true: It is possible for $\mu$ to have infinitely many infinite clusters in the full plane but no infinite clusters in the half-plane.

\begin{proof}
Let us take an ergodic decomposition  $\mu = \int \nu \,d \lambda(\nu)$ with respect to horizontal translations, so that each ergodic component $\nu$ is invariant under horizontal translations.  For $\lambda$-almost every $\nu$, there are $N=N(\nu) \in \Z_{\geq 0}$ and $k=k(\nu) \in \Z_{\geq 0} \cup \{\infty\}$ such that almost surely $\nu$ has $N$ infinite clusters and $\nu_{\hp}$ has $k$ infinite clusters.  Fix some such $\nu$.  We first show that $k \leq 2N+1$ and then improve this bound to $k \leq 1$.

Assume for the sake of contradiction that $k \geq 2N+2$.  Continuity of measure provides some $n \in \Z_{\geq 1}$ such that with positive probability, the marginal of $\mu$ on $$(\Z \times \Z_{\geq 0}) \setminus ([-n,n] \times [0,n])$$ has infinite (half-plane) clusters $C_1,C_2, \ldots, C_{2N+2}$ that touch the boundary of the box $[-n,n] \times [0,n]$ (there may be additional clusters but they will not play a role). For ease of notation, label these clusters from left to right according to the order in which they touch the boundary of $[-n,n] \times [0,n]$.  Now consider the picture in the full plane.  The clusters $C_1$ and $C_{2N+2}$ may glue together via a path that passes into the lower half-plane, but the other clusters $C_2, \dots, C_{2N+1}$ must all remain distinct, since $C_1$, $C_{2N}$, and the box together ``block'' these clusters from reaching the $x$-axis.  See \Cref{fig:oneend3}.  Thus, with positive probability $\mu$ has $N$ clusters with a total of at least $2N+1$ ends.  In this case the pigeonhole principle shows that some cluster has at least $3$ ends, in contradiction with the fact stated before the lemma.

So far, we know that $k(\nu) \leq 2N(\nu)+1<\infty$ for $\lambda$-almost every $\nu$.  \Cref{lemma:every_cluster_is_everywhere} gives
$$\mu[\text{all infinite clusters touch the $x$-axis}] =1.$$ 
It follows that for $\lambda$-almost every $\nu$, we also have
$$\nu[\text{all infinite clusters touch the $x$-axis}] =1,$$
and in particular almost surely all infinite clusters in $\nu_{\hp}$ touch the $x$-axis.  For such a $\nu$, horizontal translation-invariance guarantees that each infinite cluster contains the origin with positive probability.  Then \Cref{lem:translation-invariant-events} ensures that almost surely each of the infinite clusters in fact touches the $x$-axis at both arbitrarily large and arbitrarily small $x$-coordinates.  But this can be the case for at most a single cluster by planarity (distinct clusters cannot cross), so $k(\nu) \leq 1$.  Integrating over $\nu$ yields the desired conclusion for $\mu$.
\end{proof}

\begin{figure}
    \centering
    \includegraphics[width=0.5\linewidth]{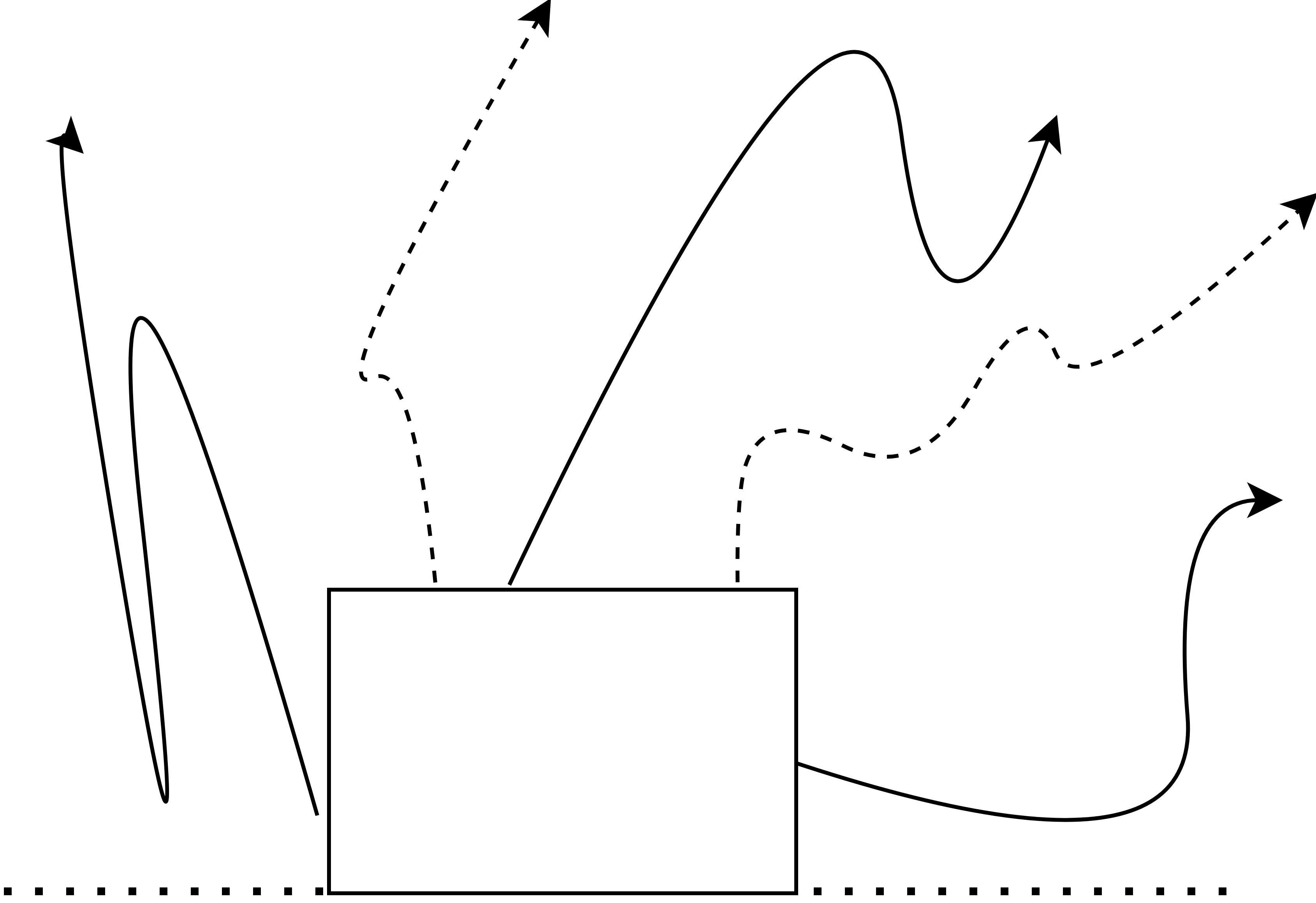}
    \caption{If three infinite clusters (solid lines) in the half-plane touch the box, then they are separated by two dual clusters (dotted lines).  The middle cluster cannot glue to any other cluster when we reveal the lower-half-plane.}
    \label{fig:oneend3}
\end{figure}

We would like to know slightly more, namely, that the number of infinite clusters in $\mu_{\hp}$ is either almost surely $0$ or almost surely $1$.  In the full plane, ergodicity immediately implies that the number of infinite clusters in $\mu$ is almost surely equal to some constant in $\mathbb{Z}_{\geq 0} \cup \{\infty\}$.  In the half-plane, however, obtaining the analogous statement is trickier because vertical translations do not necessarily preserve the number of infinite clusters.

To show that the number of infinite clusters in $\mu_{\hp}$ is almost surely constant, we need to rule out the degenerate case of an infinite cluster whose ``infinite-ness'' depends wholly on a neighborhood of the $x$-axis.   To this end, say that an infinite cluster $C$ in $\Z \times \Z_{\geq 0}$ is \emph{$n$-tenuous} if the intersection of $C$ with the half-plane $\Z \times \Z_{\geq n}$ has no infinite connected components; say that $C$ is \emph{tenuous} if it is $n$-tenuous for some $n \in \Z_{\geq 1}$.

\begin{proposition}\label{lemma:uniqueness_implies_non_tenuousness}
    Let $\mu$ be a translation-invariant edge percolation measure on $\Z^2$ such that $\mu_{\hp}$ almost surely has at most $1$ infinite cluster.  Then $\mu_{\hp}$ almost surely has no tenuous infinite cluster. 
\end{proposition}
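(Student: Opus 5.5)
The plan is to compare the half-plane $\Z \times \Z_{\geq 0}$ with its vertical translate $\Z \times \Z_{\geq n}$, using only vertical translation-invariance together with the uniqueness hypothesis. For $m \in \Z_{\geq 0}$, let $A_m$ denote the event that the marginal of $\omega$ on $H_m := \Z \times \Z_{\geq m}$ has an infinite cluster. Since being tenuous means being $n$-tenuous for some $n$, and there are countably many $n$, it suffices to fix $n \in \Z_{\geq 1}$ and show that $\mu_{\hp}$ almost surely has no $n$-tenuous infinite cluster. Call this event $T_n$, i.e., $T_n$ is the event that the half-plane $H_0$ has an $n$-tenuous infinite cluster.

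\textbf{Step 1: monotonicity.} Every infinite cluster of $H_n$ is contained in some infinite cluster of $H_0$, because $H_n \subseteq H_0$ and connectivity within $H_n$ implies connectivity within $H_0$. Hence $A_n \subseteq A_0$. Vertical translation by $n$ maps the marginal on $H_0$ to the marginal on $H_n$, so translation-invariance of $\mu$ gives $\mu(A_n) = \mu(A_0)$. Therefore
\[
\mu(A_0 \setminus A_n) = \mu(A_0) - \mu(A_n) = 0.
\]

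\textbf{Step 2: $T_n \subseteq A_0 \setminus A_n$ up to a null set.} By hypothesis, almost surely $H_0$ has at most one infinite cluster. On $T_n$ intersected with this full-measure event, $H_0$ has a unique infinite cluster $C$, and $C$ is $n$-tenuous. Suppose $H_n$ had an infinite cluster $C'$. By Step 1, $C'$ lies in an infinite cluster of $H_0$, which must be $C$. Since $C'$ is connected within $H_n$, it lies in a single connected component of $C \cap H_n$, and that component is infinite. This contradicts the $n$-tenuousness of $C$. Hence $T_n \subseteq A_0 \setminus A_n$ up to a null set, so $\mu(T_n) = 0$ by Step 1. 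A countable union over $n$ then finishes the proof.

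I expect essentially no serious obstacle. The one point needing care is the bookkeeping in Step 2: the cluster $C'$ of $H_n$ sits inside a single component of $C \cap H_n$, not merely inside $C$. This is exactly what the definition of $n$-tenuous forbids. The uniqueness hypothesis is essential here: without it, $H_0$ could contain a tenuous cluster alongside a separate non-tenuous one, and $A_0 \setminus A_n$ would not capture the event $T_n$.
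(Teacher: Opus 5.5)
Your proposal is correct and follows essentially the same approach as the paper: both compare $\Z\times\Z_{\geq 0}$ with $\Z\times\Z_{\geq n}$ via vertical translation-invariance to get $\mu(A_0\setminus A_n)=0$, then take a union bound over $n$. You additionally spell out where the uniqueness hypothesis enters, namely the inclusion $T_n\subseteq A_0\setminus A_n$, which the paper leaves implicit in its assertion that $I_0\setminus I_n$ is ``precisely'' the event of an $n$-tenuous infinite cluster.
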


\begin{proof}
Fix $n \in \mathbb{Z}_{\geq 1}$.  Let $I_0$ be the event that (a subgraph sampled according to) $\mu_{\hp}$ has an infinite cluster, and let $I_n$ be the event that the marginal of $\mu_{\hp}$ on $\mathbb{Z} \times \mathbb{Z}_{\geq n}$ has an infinite cluster.  Note that $I_n \subseteq I_0$.  Vertical translation-invariance gives $\mu_{\hp}[I_0]=\mu_{\hp}[I_n]$ and hence $\mu_{\hp}[I_0 \setminus I_n]=0$.  But $I_0 \setminus I_n$ is precisely the event that $\mu_{\hp}$ has an $n$-tenuous infinite cluster.  A union bound over $n$ establishes the proposition.
\end{proof}

In \Cref{sec:appendix} we provide a ``combinatorial'' proof of the similar statement that if $\mu$ is a finite-energy edge percolation measure on $\Z^2$, then almost surely $\mu_{\hp}$ has no tenuous infinite cluster.  This alternative proof is not logically necessary for the following arguments, but we think that it may contain ideas of independent interest.

From Proposition~\ref{lemma:uniqueness_implies_non_tenuousness} we quickly deduce that the number of infinite clusters in $\mu_{\hp}$ is almost surely constant, and thus either almost surely $0$ or almost surely $1$.

\begin{lemma}\label{lem:a.s.constant}
Let $\mu$ be a translation-invariant, ergodic edge percolation measure on $\Z^2$ that almost surely has only finitely many infinite clusters.  Then the number of infinite clusters in $\mu_{\hp}$ is either almost surely $0$ or almost surely $1$.
\end{lemma}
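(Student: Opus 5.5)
By \Cref{lemma:one_end_to_no_clust}, $\mu_{\hp}$ almost surely has at most $1$ infinite cluster. It therefore suffices to show that the event $I_0$ that $\mu_{\hp}$ has an infinite cluster has probability $0$ or $1$. The plan is to replace $I_0$, up to a null set, by an event that is invariant under all translations of $\Z^2$, and then invoke ergodicity of $\mu$.

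For $m \in \Z$, let $J_m$ be the event that the marginal of $\mu$ on the half-plane $\Z \times \Z_{\geq m}$ has an infinite cluster. Then $J_0 = I_0$ and $J_m = I_m$ in the notation of the proof of Proposition~\ref{lemma:uniqueness_implies_non_tenuousness}.
\begin{itemize}
\item Each $J_m$ is invariant under horizontal translations, since horizontal shifts preserve the half-plane $\Z\times\Z_{\geq m}$.
\item The events are nested: an infinite cluster in $\Z\times\Z_{\geq m+1}$ lies inside an infinite cluster of $\Z\times\Z_{\geq m}$, so $J_{m+1}\subseteq J_m$.
\item Vertical translation by $(0,1)$ maps $J_m$ onto $J_{m+1}$, so all $J_m$ have the same probability.
\end{itemize}
Together with nestedness, equal probabilities force $\mu(J_m \,\triangle\, J_0)=0$ for every $m$. (Equivalently, in the language of Proposition~\ref{lemma:uniqueness_implies_non_tenuousness}, an infinite cluster in $\Z\times\Z_{\geq 0}$ almost surely contains an infinite cluster in $\Z\times\Z_{\geq n}$, since tenuous clusters almost surely do not occur.)

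Now set $J := \bigcap_{m} J_m$. This event is invariant under vertical translations, because the shift permutes the $J_m$. It is also invariant under horizontal translations, because each $J_m$ is. Hence $J$ is invariant under all of $\Z^2$, and ergodicity of $\mu$ gives $\mu(J)\in\{0,1\}$. Since $\mu(J\,\triangle\, J_0)=0$ by countable additivity, we get $\mu_{\hp}(I_0)\in\{0,1\}$.

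If $\mu_{\hp}(I_0)=0$, there is almost surely no infinite cluster. If $\mu_{\hp}(I_0)=1$, then combining with the bound from \Cref{lemma:one_end_to_no_clust} shows there is almost surely exactly one infinite cluster.

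The only delicate point is that the half-plane event $I_0$ is not itself shift-invariant. The argument handles this by comparing $I_0$ with its vertical translates $J_m$ and noting that equal measure plus nestedness makes them agree up to null sets. This does not require the full strength of Proposition~\ref{lemma:uniqueness_implies_non_tenuousness}, only the same monotonicity observation used in its proof, which is why the hypothesis of at most $1$ infinite cluster from \Cref{lemma:one_end_to_no_clust} enters only at the final counting step.
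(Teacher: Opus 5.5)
Your argument is correct. It follows a leaner route than the paper's.

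The paper first invokes Proposition~\ref{lemma:uniqueness_implies_non_tenuousness} to rule out tenuous clusters. It then argues that the number of \emph{non-tenuous} infinite clusters is non-decreasing under downward vertical shifts, so each super-level set $A_n$ is shift-invariant up to null sets. Finally it intersects all translates of $A_n$ and applies ergodicity.

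You apply the same mechanism (almost-invariant, then intersect all translates, then ergodicity) directly to the existence event $I_0$. Almost-invariance comes for free from the nesting $J_{m+1}\subseteq J_m$ together with equal measures. This removes any need for the notion of tenuousness, or for analyzing how cluster counts change when a horizontal strip is removed (clusters can merge or split there). It also shows that the $0$--$1$ law for $I_0$ holds for every translation-invariant ergodic $\mu$. The finiteness hypothesis enters only through the final ``at most one'' count from Lemma~\ref{lemma:one_end_to_no_clust}.

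The paper is organized around non-tenuousness because it needs that notion anyway for Proposition~\ref{proposion31}. For this lemma, yours is the more direct proof.

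One small remark: your parenthetical ``equivalently'' is only an equivalence once uniqueness is known. By itself, $\mu(J_0\setminus J_n)=0$ says merely that \emph{some} infinite cluster survives in $\Z\times\Z_{\geq n}$. Nothing in your main argument depends on it.
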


\begin{proof}
\Cref{lemma:one_end_to_no_clust} tells us that $\mu_{\hp}$ almost surely has at most $1$ infinite cluster.  To complete the proof it suffices to show that the number of infinite clusters is almost surely constant. 

\Cref{lemma:uniqueness_implies_non_tenuousness} tells us that almost surely $\mu_{\hp}$ has no tenuous infinite cluster.  The number of non-tenuous infinite clusters in $\mu_{\hp}$ is a measurable function valued in $\Z_{\geq 0} \cup \{\infty\}$.   It is invariant under horizontal translations and (due to non-tenuousness!) non-decreasing under downward vertical translations (see \Cref{fig:tenuous}), so by the ergodicity of $\mu$ it is almost surely constant.  Indeed, it suffices to check that each super-level set has measure either $0$ or $1$.  For each $n \in \Z_{\geq 0}$, let $A_n$ be the event that there are at least $n$ non-tenuous infinite clusters in the upper half-plane.  We know that horizontal translations preserve $A_n$.  Downward vertical translations map $A_n$ into itself and in particular preserve $A_n$ up to sets of measure $0$; thus all vertical translations preserve $A_n$ up to sets of measure $0$.  Let $A^*_n$ be the intersection of all of the translations of $A_n$; this is clearly an invariant set, and $\mu(A^*_n)=\mu(A_n)$ since each of the countably many translates of $A_n$ differs from $A_n$ in a set of measure $0$.  Ergodicity implies that $\mu(A^*_n) \in \{0,1\}$, so the same holds for $\mu(A_n)$. 
\end{proof}

\begin{figure}
\begin{tikzpicture}[scale=1.0, line width=1pt]

\def\strip{0.9} 

\begin{scope}[xshift=0cm]
  \fill[gray!20] (-0.5,0) rectangle (2.2,\strip);
  \draw (-0.5,0) -- (2.2,0);
  \draw[very thick] (-0.5,\strip) -- (2.2,\strip);
  \draw plot[smooth] coordinates {(-0.5,0.3) (0.3,0.2) (0.6,0.6) (1,2.5)};
  \node at (1.0,-0.8) {same};
\end{scope}

\begin{scope}[xshift=4cm]
  \fill[gray!20] (-0.5,0) rectangle (2.2,\strip);
  \draw (-0.5,0) -- (2.2,0);
  \draw[very thick] (-0.5,\strip) -- (2.2,\strip);
  \draw plot[smooth] coordinates {(0,2.5) (0.8,0.4) (1.6,2.5)};
  \node at (1.0,-0.8) {increase};
\end{scope}

\begin{scope}[xshift=8cm]
  \fill[gray!20] (-0.5,0) rectangle (2.2,\strip);
  \draw (-0.5,0) -- (2.2,0);
  \draw[very thick] (-0.5,\strip) -- (2.2,\strip);
  \draw plot[smooth] coordinates {(-0.5,0.6) (-0.2,0.6) (0.2,1.2) (0.6,0.6) (1.0,1.5) (1.4,0.5) (1.8,1.3) (2.2,0.65)};
  \node at (1.0,-0.8) {decrease};
\end{scope}

\end{tikzpicture}

\caption{Removing a finite-height vertical strip can change the number of infinite clusters.  The first two examples show non-tenuous infinite clusters, and the third shows a tenuous infinite cluster.  \label{fig:tenuous}}
\end{figure}
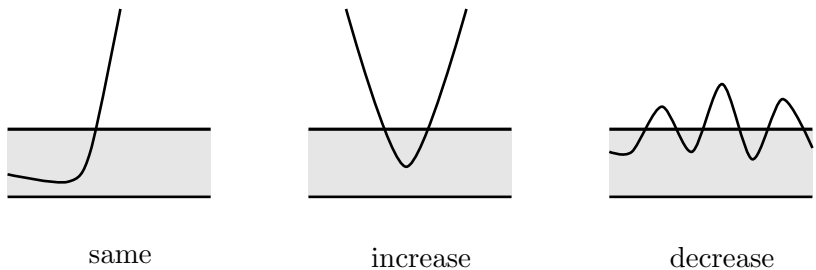

\subsection{Planar duality}\label{sec:duality}

Let $G$ be a planar graph with a fixed planar embedding.  The \emph{planar dual} of $G$, denoted $G^*$, is the graph whose vertex set is the set of plaquettes (faces) of $G$, where each edge of $G$ corresponds to an edge in $G^*$ connecting the two plaquettes touching that edge.  Notice that $E(G),E(G^*)$ are naturally identified.  The graph $\Z^2$ is \emph{self-dual} in the sense that its dual is isomorphic to $\Z^2$ (identify each vertex $(x,y)$ of $\Z^2$ with the plaquette centered at $(x+1/2,y+1/2)$).

The \emph{complement} of a subgraph of $G$ is the subgraph obtained by exchanging open and closed edges, i.e., the complement of $(\omega_e)_{e \in E(G)}$ is $(1-\omega_e)_{e \in E(G)}$. 
Say that the \emph{dual} of a subgraph $H$ of $G$ is the subgraph $H^*$ of $G^*$ with edge set corresponding to the edge set of the complement of $H$.  Notice that the edges of $H$ and $H^*$ do not cross.  Thus, for $G=\Z^2$, a cluster of $H$ is finite if and only if it is enclosed by a loop of $H^*$, and vice versa.

The dual operation also lets us assign a dual measure $\mu^*$ on $G^*$ to any edge percolation measure $\mu$ on $G$.  It is easy to show that $\mu$ is translation-invariant (respectively, has finite-energy) if and only if $\mu^*$ is translation-invariant (respectively, has finite-energy).  In the case $G=\Z^2$, we write $\mu^*_{\hp}$ as shorthand for $(\mu^*)_{\hp}$.

We will need 
the following simple fact, for which we could not find a reference.
\begin{lemma}\label{lemma:iff_infinite}
    Let $\mu$ be a translation-invariant edge percolation measure on $\Z^2$. Then $\mu$ has infinitely many infinite clusters with positive probability if and only if $\mu^*$ has infinitely many infinite clusters with positive probability.
\end{lemma}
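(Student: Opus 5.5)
We will prove one direction: if $\mu$ has infinitely many infinite clusters with positive probability, then so does $\mu^*$. The converse then follows by applying this to $\mu^*$, since $\mu^{**}$ is a translate of $\mu$ (identify the dual of the dual lattice with $\Z^2$ shifted by $(1/2,1/2)$ twice). The plan is to use planar topology to manufacture many infinite dual arms separating the primal infinite clusters. We then count distinct dual clusters using the fact quoted before \Cref{lemma:one_end_to_no_clust}: almost surely every infinite cluster of a translation-invariant percolation measure has at most $2$ ends. Since $\mu^*$ is translation-invariant, this fact applies to $\mu^*$ as well.

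\textbf{Reductions.} Pass to the ergodic decomposition of $\mu$. We get a component $\nu$, charged with positive weight, under which there are almost surely infinitely many infinite primal clusters and all clusters of $\nu$ and $\nu^*$ have at most $2$ ends. It suffices to show that $\nu^*$ almost surely has at least $k/2$ infinite clusters for every $k$. Fix $k\ge 3$. Almost surely there is a box $B=[-n,n]^2$ (with $n$ random) that meets $k$ distinct infinite primal clusters $C_1,\dots,C_k$.

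\textbf{Construction of dual arms.} The set $B\cup C_1\cup\dots\cup C_k$ is connected. Each $C_i$ is infinite and meets $B$, so each $C_i$ contains an infinite self-avoiding path $\gamma_i$ starting at $\partial B$. Outside $B$ we then have $k$ disjoint primal arms $\gamma_1,\dots,\gamma_k$, listed in cyclic order by where they leave $B$. Distinct clusters do not share vertices, so these arms cut the exterior of $B$ into $k$ unbounded sectors $S_1,\dots,S_k$, with $S_i$ lying between $\gamma_i$ and $\gamma_{i+1}$.

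In each sector $S_i$ we take the boundary walk of the cluster $C_i$ on the side facing $S_i$. This is the sequence of plaquettes adjacent to $C_i$ that is traced while walking along its outer boundary. Consecutive plaquettes in this walk are separated by closed edges, so it is an open dual path. It starts at a plaquette adjacent to $\partial B$. It cannot close up into a finite loop, since that loop would enclose $C_i$ or separate it from $C_{i+1}$ inside $S_i$. Hence it yields an infinite dual path $\delta_i$ contained in $S_i$ apart from finitely many plaquettes near $B$. The step I expect to be the main obstacle is making this boundary-walk argument rigorous, in particular showing that $\delta_i$ really escapes to infinity within $S_i$ and does not double back. I would handle it by taking the dual cluster of the plaquette of $S_i$ adjacent to $\partial B$ next to $\gamma_i$. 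If this dual cluster were finite, it would be enclosed by a primal circuit in $S_i\cup B$, and that circuit would merge $C_i$ and $C_{i+1}$ through $B$ or trap one of the arms, which is a contradiction.

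\textbf{Counting distinct dual clusters.} Dual arms lying in different sectors can be joined only through plaquettes inside $B$, because outside $B$ they are separated by the primal arms $\gamma_j$, which dual edges cannot cross. Suppose a single dual cluster $D$ contained arms $\delta_{i_1},\delta_{i_2},\delta_{i_3}$ in three different sectors. Deleting the finitely many plaquettes of $D$ inside $B$ (enlarged by one layer) would leave at least $3$ infinite components, separated by the primal arms. Then $D$ would have at least $3$ ends, which almost surely does not happen. So each infinite dual cluster contains arms from at most $2$ sectors, and there are at least $k/2$ distinct infinite dual clusters. Since $k$ is arbitrary, $\nu^*$ almost surely has infinitely many infinite clusters, and so $\mu^*$ has infinitely many infinite clusters with positive probability.
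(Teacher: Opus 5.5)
Your argument is the paper's own: a box meeting many distinct infinite primal clusters, an infinite dual arm between each pair of consecutive primal arms, and the at-most-two-ends property of $\mu^*$ to get at least $k/2$ infinite dual clusters. Two of your choices are fine and slightly cleaner than the paper's. Using a random box instead of the paper's continuity-of-measure box works. So does your explicit check that arms in different sectors give different ends. The ergodic decomposition is unnecessary, since your argument works pointwise on the event where there are infinitely many infinite primal clusters and all dual clusters have at most two ends. Also, individual ergodic components are not "charged with positive weight"; what has positive $\lambda$-measure is a set of components.

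The step that fails as written is your justification of the dual arm $\delta_i$. The dual cluster of the single plaquette $p$ of $S_i$ next to $\gamma_i$ can be finite with no contradiction. Take $\gamma_i$ starting at $(0,n)$ and going up, with $\gamma_{i+1}$ starting further right, at $(5,n)$ say. Let $p$ have corners $(0,n),(1,n),(0,n+1),(1,n+1)$, and suppose its other three sides are open. Then $\{p\}$ is an entire dual cluster, and the enclosing circuit is the boundary of $p$, which lies entirely in $C_i$. It neither meets $C_{i+1}$ nor traps an arm, so your case analysis misses this case.

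There is a second problem if you meant the unrestricted dual cluster, as ``a primal circuit in $S_i\cup B$'' suggests. An infinite such cluster could pass through $B$ and escape inside a different sector $S_j$. Your counting step needs $\delta_i\subseteq S_i$ up to finitely many plaquettes, so that each sector has its own arm.

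The repair is the standard half-plane crossing duality in $\overline{S_i}$. Let $a_i$ be the arc of $\partial B$ between the starting points of $\gamma_i$ and $\gamma_{i+1}$. Restrict dual connections to $S_i$, and consider all plaquettes of $S_i$ sharing an edge with $a_i$ simultaneously; there is at least one. Let $U$ be the union of their restricted dual clusters, and suppose $U$ were finite. Every edge on the outer boundary of $U$ not lying on $a_i$ is open: either it is an arm edge, or it is open because a closed edge would enlarge $U$. This outer boundary then contains an open path in $\overline{S_i}$ from $\gamma_i$ to $\gamma_{i+1}$, merging $C_i$ and $C_{i+1}$, a contradiction. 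Hence $U$ is infinite, and K\"onig's lemma gives an infinite dual path inside $S_i$. This is exactly what the paper's one-line assertion that $\partial B$ ``alternately touches infinite primal and dual arms'' encodes. With this repair your proof goes through.
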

\begin{proof}
It suffices to prove the forward implication.  Assume that $\mu$ has infinitely many infinite clusters with probability $\gamma>0$.  Let $N \in \Z_{\geq 1}$.  Continuity of measure provides some large box $B=B(N)$ that with probability at least $\gamma/2$ intersects at least $2N$ infinite clusters of $\mu$.  With the same probability it is moreover the case that all infinite clusters of $\mu^*$ have at most $2$ ends (see the remark before the statement of \Cref{lemma:one_end_to_no_clust}).
    
In $\Z^2 \setminus B$, the boundary of $B$ alternately touches infinite primal and dual ``arms'' (clusters with paths to infinity).  Thus, whenever there are at least $2N$ primal arms, there are also at least $2N$ dual arms.  In particular, the infinite dual clusters in $\Z^2$ have at least $2N$ ends in total, so there are at least $N$ infinite dual clusters.  Since this occurs with probability at least $\gamma/2$ for each $N$, we conclude that $\mu^*$ has infinitely many infinite clusters with probability at least $\gamma/2$.
\end{proof}

\subsection{Non-coexistence}
We showed in \Cref{sec:uniqueness} that under the hypotheses of \Cref{thm:main2}, the number of infinite clusters in $\mu_{\hp}$ is either almost surely $0$ or almost surely $1$, and in the latter case the infinite cluster is almost surely not tenuous; by \Cref{lemma:iff_infinite} the same also holds for $\mu^*_{\hp}$.  This gives four possibilities, and \Cref{thm:main2} amounts to the assertion that $\mu_{\hp}$ and $\mu^*_{\hp}$ do not both almost surely have unique infinite clusters.  We achieve this in the following proposition.

\begin{proposition}\label{proposion31}
    Let $\mu$ be a translation-invariant edge percolation measure on $\Z^2$ which almost surely has only finitely many infinite clusters, and assume that $\mu_{\hp}$ almost surely has a unique infinite cluster.
    Then almost surely $\mu^*_{\hp}$ has no infinite cluster. 
\end{proposition}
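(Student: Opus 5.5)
The plan is to show that for every $j\in\Z$ the plaquette centered at $(j+1/2,1/2)$ almost surely lies in a finite cluster of the dual half-plane configuration, and then to conclude that $\mu^*_{\hp}$ has no infinite cluster.

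\textbf{Reduction to boundary plaquettes.} By \Cref{lemma:iff_infinite}, $\mu^*$ almost surely has only finitely many infinite clusters, and $\mu^*$ is translation-invariant. So \Cref{lemma:one_end_to_no_clust} applies to $\mu^*$: almost surely every infinite cluster of $\mu^*_{\hp}$ touches the bottom row of dual vertices, namely the plaquettes centered at $(j+1/2,1/2)$. A countable union bound over $j$ therefore reduces the proposition to the claim that each such plaquette is almost surely in a finite dual half-plane cluster. By horizontal translation-invariance it suffices to treat $j=0$.

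\textbf{Primal arms on both sides.} As in the proof of \Cref{lemma:one_end_to_no_clust}, take the ergodic decomposition $\mu=\int\nu\,d\lambda(\nu)$ with respect to horizontal translations.
\begin{itemize}
\item For $\lambda$-a.e.\ $\nu$, the half-plane marginal $\nu_{\hp}$ almost surely has a unique infinite cluster $C$, and $C$ touches the $x$-axis by \Cref{lemma:one_end_to_no_clust}.
\item Hence some vertex $(i,0)$ lies in $C$ with positive $\nu$-probability. By horizontal invariance, the event $P=\{(0,0)\in C\}$ has positive probability.
\item \Cref{lem:translation-invariant-events} then shows that $\nu$-almost surely there are integers $a<0$ and $b>1$ with $(a,0),(b,0)\in C$. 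Uniqueness of the infinite cluster is what makes the left and right points lie in the same cluster.
\end{itemize}
Integrating over $\nu$, $\mu$-almost surely such $a,b$ exist with $(a,0)$ and $(b,0)$ joined by a simple open path $\gamma$ in $\Z\times\Z_{\geq 0}$.

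\textbf{Topological blocking.} Concatenating $\gamma$ with the segment $[a,b]\times\{0\}$ gives a Jordan curve $J$. Recall that the dual half-plane graph has vertices the plaquettes with centers at height $\ge 1/2$, and edges dual to the edges of $\Z\times\Z_{\geq0}$. Two facts confine the dual cluster of the plaquette $p$ centered at $(1/2,1/2)$:
\begin{itemize}
\item A dual edge dual to a horizontal $x$-axis edge would lead to a plaquette at height $-1/2$, which is not in the dual half-plane graph. So dual half-plane paths never cross the segment.
\item Open dual edges never cross open primal edges, so they never cross $\gamma$.
\end{itemize}
The plaquette $p$ lies in the interior of $J$: it sits just above the segment at an $x$-coordinate strictly between $a$ and $b$, so a short vertical ray from its center crosses $J$ exactly once. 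Therefore the dual cluster of $p$ stays inside the bounded interior of $J$ and is finite.

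I expect the main obstacle to be the second step rather than the topology. One must produce primal points on both sides of the origin that belong to the same cluster. This needs the passage to horizontal ergodic components, so that \Cref{lem:translation-invariant-events} applies even though $\mu$ itself is only assumed translation-invariant. It also needs care to check that, in almost every component, the origin belongs to the infinite cluster with positive probability.
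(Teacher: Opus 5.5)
\textbf{There is a genuine gap in the topological blocking step.} Your reduction to the plaquette $p$ at $(1/2,1/2)$, and your use of the horizontal ergodic decomposition with \Cref{lem:translation-invariant-events}, match the paper. The problem comes after that.

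You take an arbitrary simple open path $\gamma$ in $\Z\times\Z_{\geq 0}$ from $(a,0)$ to $(b,0)$. You then assert that $\gamma$ together with $[a,b]\times\{0\}$ is a Jordan curve with $p$ in its interior. Nothing prevents $\gamma$ from running along the $x$-axis. In particular, it may use the edge $e_0$ from $(0,0)$ to $(1,0)$ directly beneath $p$; for instance, $\gamma$ could be the segment $[a,b]\times\{0\}$ itself. In that case $J$ is not simple, your ``exactly one crossing'' count for the vertical ray fails at $(1/2,0)$, and $J$ need not separate $p$ from infinity.

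Choosing $\gamma$ more carefully cannot fix this on its own. Suppose $e_0$ is a bridge of $C$, meaning every path in $C$ between axis points on opposite sides of $p$ uses $e_0$. Then no choice of $a,b,\gamma$ encloses $p$, and the dual cluster of $p$ can escape upward to infinity between the two halves of $C$. Uniqueness of $C$, together with $C$ meeting the axis on both sides, does not rule this out: take $C$ to be just the $x$-axis.

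The paper addresses exactly this point in a footnote to its proof. It notes that a connecting path passing above $(1/2,1/2)$ exists almost surely, since otherwise $\mu_{\hp}$ would have a $1$-tenuous infinite cluster, which \Cref{lemma:uniqueness_implies_non_tenuousness} forbids. One clean way to build this into your argument:
\begin{enumerate}
\item Since $C$ is almost surely non-tenuous and touches the axis, $C\cap(\Z\times\Z_{\geq 1})$ has an infinite component. Follow a path from it down to the first axis vertex. This produces some $x'$ such that the edge $(x',0)$--$(x',1)$ is open and $(x',1)$ lies in an infinite cluster of the configuration restricted to $\Z\times\Z_{\geq 1}$.
\item This event therefore has positive probability in almost every horizontal ergodic component. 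Apply \Cref{lem:translation-invariant-events} to it instead of to $\{(0,0)\in C\}$, obtaining $a<0<1<b$ with this property.
\item By vertical translation-invariance, the restriction to $\Z\times\Z_{\geq 1}$ has the law of a shifted copy of $\mu_{\hp}$, so it almost surely has a unique infinite cluster. Hence $(a,1)$ and $(b,1)$ are joined by a path inside $\Z\times\Z_{\geq 1}$.
\end{enumerate}
The resulting path from $(a,0)$ to $(b,0)$ meets the axis only at its endpoints. For this path your Jordan-curve argument goes through verbatim: the ray crosses once, and dual half-plane paths can cross neither $\gamma$ nor the axis.
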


\begin{proof}
\Cref{lemma:one_end_to_no_clust} tells us that the (unique) infinite cluster of $\mu_{\hp}$ almost surely touches the $x$-axis. Horizontal translation-invariance then implies that the origin is contained in an infinite cluster of $\mu_{\hp}$ with positive probability. 
The same argument (combined with \Cref{lemma:iff_infinite}) shows that if $\mu^*_{\hp}$ contains an infinite cluster with positive probability, then with positive probability the dual lattice point $(1/2,1/2)$ is contained in an infinite cluster.  Thus, to prove the proposition it suffices to show that $\mu^*_{\hp}[(1/2,1/2) \cc \infty] = 0$.

Perform an ergodic decomposition $\mu = \int \nu \,d \lambda(\nu)$ with respect to horizontal translations.  Then $\lambda$-almost surely, $\nu_{\hp}$ has a unique infinite cluster and this cluster touches the $x$-axis. Horizontal translation invariance implies that the origin is contained in an infinite cluster of $\nu_{\hp}$ with positive probability, and Lemma~\ref{lem:translation-invariant-events} tells us that almost surely there are some $x_1\leq 0$ and $x_2 \geq 1$ such that each $(x_i,0)$ is contained in the (unique) infinite cluster.   In this case there must be a path $P$ connecting the vertices $(x_1,0)$ and $(x_2,0)$.  The path $P$ and the $x$-axis together form a loop\footnote{There is almost surely a path $P$ that passes above $(1/2,1/2)$. Indeed, if not, $\mu_{\hp}$ would have a $1$-tenuous infinite cluster, which is ruled out by \Cref{lemma:uniqueness_implies_non_tenuousness}.} that prevents $(1/2,1/2)$ from being contained in an infinite cluster in $\nu^*_{\hp}$; see Figure~\ref{fig:blocking1}. Thus $\nu^*_{\hp}[(1/2,1/2) \cc \infty]=0$.  Integrating over $\nu$ gives $\mu^*_{\hp}[(1/2,1/2) \cc \infty] = 0$, as desired. 

\begin{figure}[ht]
    \includegraphics[scale = 0.05]{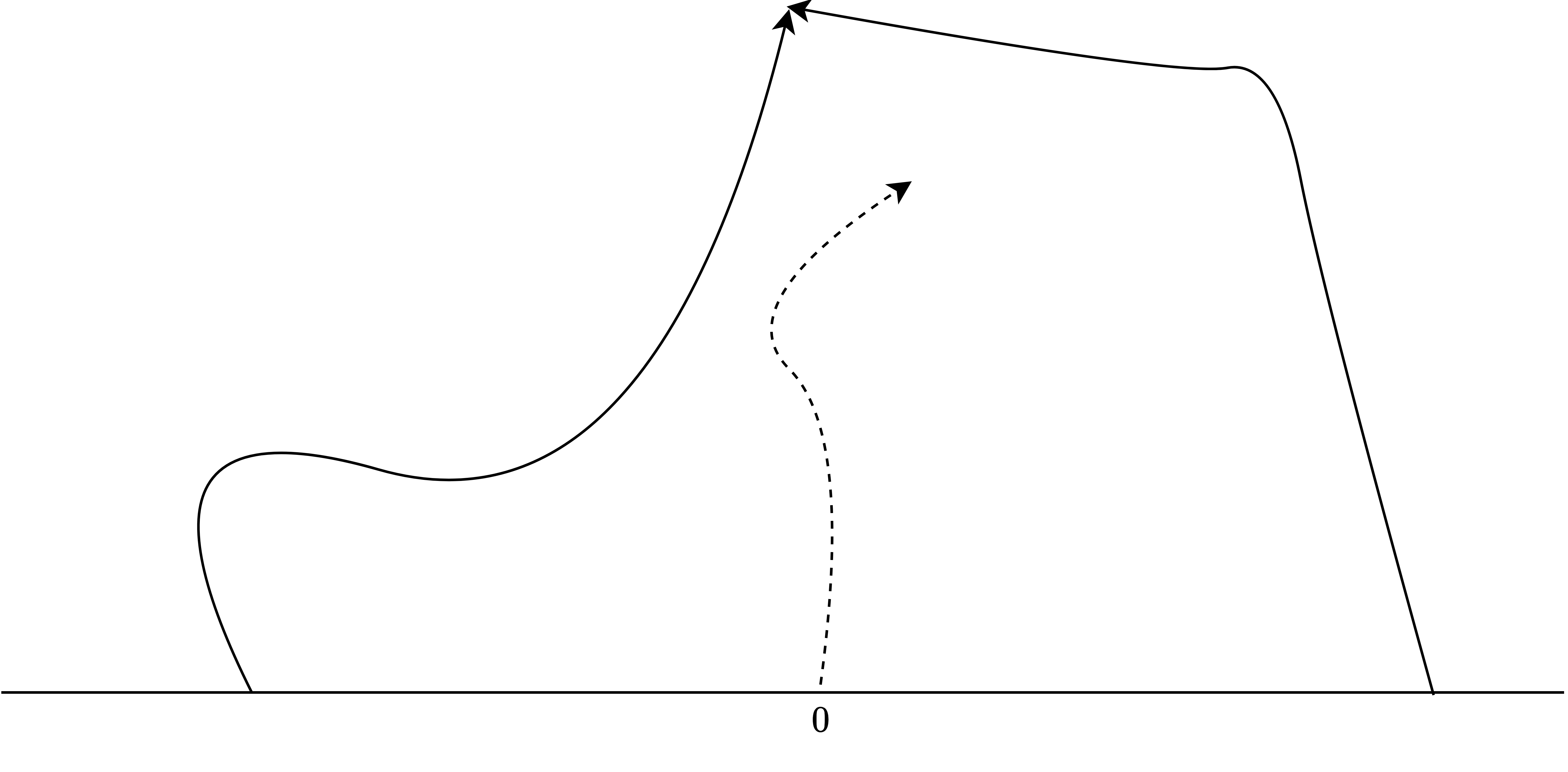}
    \caption{ \label{fig:blocking1} The solid lines connect the positive and negative $x$-axes to the unique infinite primal cluster.  This creates a finite region which traps the dual cluster containing the origin plaquette.}
\end{figure}
\end{proof}

\begin{proof}[Proof of \Cref{thm:main2}]
\Cref{lem:a.s.constant} tells us that the number of infinite clusters in $\mu_{\hp}$ is either almost surely $0$ or almost surely $1$, and by \Cref{lemma:iff_infinite} the same holds for $\mu^*_{\hp}$.  If $\mu_{\hp}$ almost surely has no infinite cluster, then either the second or third outcome of the trichotomy occurs.  If instead $\mu_{\hp}$ almost surely has a unique infinite cluster, then \Cref{proposion31} implies that almost surely $\mu^*_{\hp}$ has no infinite cluster, so the first outcome of the trichotomy occurs.
\end{proof}

We remark that if one assumes ergodicity under horizontal translations (rather than merely ergodicity under $\mathbb{Z}^2$-translations) in \Cref{thm:main2}, then many parts of the proof simplify.  In particular, \Cref{lem:a.s.constant} becomes immediate and in several places we can avoid appealing to ergodic decompositions.

\section{The random-cluster model}\label{sec:rc}
Theorem~\ref{thm:main} has implications for the well-studied random-cluster model.  Let $G$ be a finite graph with a designated \emph{boundary} $\partial G \subseteq V(G)$.  By \emph{boundary conditions} on $G$ we mean a partition $\xi$ of $\partial G$.  For a subgraph $\omega \in \{0,1\}^{E(G)}$ of $G$ with boundary conditions $\xi$, write $|\omega|$ for the number of open edges in $\omega$, and write $\kappa(\omega)$ for the number of connected components (of vertices) in $\omega$ after identifying the vertices in each part of $\xi$ (i.e., wiring them together).  The \emph{random-cluster model} on $(G,\xi)$ with parameters $p \in (0,1)$ and $q\in (0,\infty)$ is the percolation measure $\phi_{G,\xi,p,q}$ on $G$ defined by
$$\phi_{G,\xi,p,q}(\omega):=Z^{-1}\left(\frac{p}{1-p}\right)^{\abs{\omega}} q^{\kappa(\omega)},$$
where $Z=Z(G,\xi,p,q)>0$ is a normalizing factor to make $\phi_{G,\xi,p,q}$ into a probability measure.  The random-cluster model provides a simultaneous generalization for several notorious models in statistical physics, including Bernoulli percolation (from $q=1$) and the Ising model (from $q=2$).  See \cite{grimmett2004random} for further background.

The random-cluster model satisfies the FKG inequality exactly when $q \geq 1$, so it is no surprise that the $q \geq 1$ regime is much better understood than the $q<1$ regime.  For example, when $q \geq 1$ (and $p$ is arbitrary), one can obtain a random-cluster model on $\Z^2$ as a weak limit of the corresponding random-cluster models on finite grids, with either free or wired boundary conditions; in fact one does not need to pass to a subsequence to obtain convergence.  Such a result is not known for any choice of boundary conditions when $q<1$.  See \cite[Section 3.6]{grimmett2004random} and the recent work \cite{beffara2025newboundcriticalpoint}.

The main interest is in the locations of phase transitions as $p$ increases.  On the square lattice $\Z^2$, the \emph{self-dual point} is $p_{\sd}(q): =\frac{\sqrt{q}}{{1+\sqrt{q}}}$. In the positive-association regime $q \geq 1$, Zhang's non-coexistence argument shows that the critical point always is greater than or equal to the self-dual point.  Thanks to work of Beffara and Duminil-Copin~\cite{beffara2012self} and of Duminil-Copin, Raoufi, and Tassion~\cite{DCsharpness}, for $q \geq 1$ it is now known that the self-dual point is the critical point of a unique sharp phase transition (for all choices of boundary conditions); this generalizes Kesten's classical result \cite{kesten1980critical} for Bernoulli percolation ($q=1$).  The self-dual point is still conjectured to be critical in the $q<1$ regime, but not even the analogue of Zhang's result is known.  Our Theorem~\ref{thm:main} provides a weak substitute, as follows.

Consider the \emph{torus graph} $\mathbb{T}_n$, namely, the Cayley graph of $(\Z/n\Z)^2$ with respect to the generators $(1,0),(0,1)$ (with no boundary).  For each $n \in \Z_{\geq 3}$, this graph is self-dual, just like $\Z^2$. 
We can also interpret $\mathbb{T}_n$ as the grid graph on $[-n/2,n/2)^2$ with suitable boundary conditions.  Extracting a subsequence, we obtain (for arbitrary $q$) a percolation measure $\phi_{p,q}$ on $\Z^2$ as a weak limit of the measures $\phi_{\mathbb{T}_n,p,q}$; the dual measures $\phi_{\mathbb{T}_n,p,q}^*$ also weakly converge to the dual $\phi_{p,q}^*$.  Both $\phi_{p,q}$ and $\phi_{p,q}^*$ are translation-invariant (due to the symmetry of $\T_n$) and have finite-energy.  Theorem~\ref{thm:main}, applied to the individual components of the ergodic decomposition, yields the following corollary.

\begin{corollary}\label{cor:rc}
Let $p \in (0,1)$ and $q \in (0,\infty)$, and let $(\phi_{p,q})_{\hp},(\phi_{p,q}^*)_{\hp}$ be the marginals on $\Z \times \Z_{\geq 0}$ of the random-cluster measures defined above.  Then almost surely the number of infinite clusters in $(\phi_{p,q})_{\hp}$ plus the number of infinite clusters in $(\phi_{p,q}^*)_{\hp}$ is at most $1$. 
\end{corollary}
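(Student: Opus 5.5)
The plan is to deduce \Cref{cor:rc} from \Cref{thm:main} applied to the components of an ergodic decomposition of $\phi_{p,q}$, as in the remark after \Cref{thm:main}. This needs three facts: (i) $\phi_{p,q}$ is translation-invariant; (ii) it has (uniform) finite-energy; (iii) almost every ergodic component inherits finite-energy. The duality statement is then automatic, since $\phi_{p,q}^*$ is, by definition, the image of $\phi_{p,q}$ under the dual map. That map commutes with translations, so it respects the ergodic decomposition.

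For (i), each $\phi_{\T_n,p,q}$ is invariant under the translations of $\T_n$. A fixed translation of $\Z^2$ acts as a translation of $\T_n$ on any fixed finite window once $n$ is large. Hence the weak limit along the subsequence is $\Z^2$-invariant.

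For (ii), I will compute the single-edge conditional probability in the finite measure. Given all other edges of $\T_n$, opening $e$ changes $|\omega|$ by one. It changes $\kappa$ by $0$ or $-1$, depending on whether the endpoints of $e$ are already connected off $e$. Therefore
$$\phi_{\T_n,p,q}(\omega_e=1\mid \omega_{E\setminus\{e\}}) \in \Big\{p,\ \tfrac{p}{p+q(1-p)}\Big\},$$
and both values lie in $[\delta,1-\delta]$ for some $\delta=\delta(p,q)>0$, for every $n$ and every configuration.

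To pass this bound to the limit, I will use its equivalent cylinder-event form. For every cylinder event $A$ measurable off $e$,
$$\delta\,\phi(A)\le \phi(A\cap\{\omega_e=1\})\le (1-\delta)\,\phi(A).$$
These inequalities are preserved under weak limits because cylinder events are clopen. Approximating a general event off $e$ by cylinder events, they give the essential infimum and supremum bounds for $\phi_{p,q}$. The same argument, or the symmetric role of closing $e$, gives the bounds for $\phi^*_{p,q}$.

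For (iii), I will cite \cite[page 307]{burton1991topological}. I could also argue directly: the conditional-probability bound holds for the mixture, and almost every component of the decomposition has the same conditional probabilities, because the invariant $\sigma$-algebra is contained mod null sets in the tail of the configuration off $e$. I expect this step to be the only delicate point, and I would simply rely on the cited reference.

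Finally, I will apply \Cref{thm:main} to each ergodic component $\nu$, which is translation-invariant, ergodic, and has finite-energy. In each of the three alternatives of the trichotomy, the number of infinite clusters of $\nu_{\hp}$ plus that of $\nu^*_{\hp}$ is almost surely at most $1$. Integrating over the components then gives the conclusion for $\phi_{p,q}$. Subsequence non-uniqueness causes no difficulty, since the corollary is asserted for the chosen limit.
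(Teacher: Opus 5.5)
Your proposal is correct and follows the paper's route. You get translation-invariance from the symmetry of $\T_n$ and uniform finite-energy for $\phi_{p,q}$ and $\phi^*_{p,q}$, then apply Theorem~\ref{thm:main} to the ergodic components (which inherit finite-energy, as in the cited Burton--Keane reference) and integrate. Your explicit single-edge conditional probabilities $p$ and $p/(p+q(1-p))$, and the cylinder-event argument carrying them to the weak limit, fill in details the paper asserts without proof.
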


This result is the first of its type for $q<1$ and provides weak evidence that critical point is greater than or equal to the self-dual point. 
For $q\geq 1$, it is known that percolation in the half-plane occurs whenever $p > p_{\sd}$ (due to FKG, the Russo--Seymour--Welsh argument, and sharpness; see, e.g., \cite{duminil2019renormalization}).  One can obtain versions of Corollary~\ref{cor:rc} for other weak limits of random-cluster measures on grids (with various boundary conditions), provided that one performs extra averaging to ensure that the resulting limit is translation-invariant; we leave the details to the interested reader.

\section{The uniform odd subgraph}\label{sec:odd}
We explain how to adapt the arguments from Section~\ref{sec:thm1.1} to the uniform odd subgraph.
\subsection{Setup}
We begin with some general considerations.  A locally finite graph is \emph{even} (respectively, \emph{odd}) if all of its vertices have even (respectively, odd) degrees.  Every graph has even subgraphs (e.g., the empty subgraph), but only some graphs have odd subgraphs; a sufficient (but not necessary) criterion is admitting a perfect matching.

The \emph{uniform even subgraph} of a finite graph $G$ is the uniform distribution on the even subgraphs of $G$, viewed as an edge percolation measure.  In general, the set of even subgraphs forms a closed subgroup of $\{0,1\}^{E(G)} \cong (\Z/2\Z)^{E(G)}$, and the uniform even subgraph measure coincides with the Haar measure on this subgroup; this notion also makes sense when $G$ is infinite (and locally finite).

If a finite graph $G$ has odd subgraphs, then the \emph{uniform odd subgraph} of $G$ is the uniform distribution on the odd subgraphs of $G$.  One way to sample a uniform odd subgraph is to take the symmetric difference (XOR) of a fixed odd subgraph and a uniform even subgraph; this construction also provides a uniform odd subgraph measure for an infinite graph $G$.

We now specialize to the square lattice $\Z^2$.  The Haar measure approach produces uniform even and odd subgraph measures $\UEG, \UOG$ on $\Z^2$ (note that $\Z^2$ admits a perfect matching). 
These measures can also be constructed as suitable weak limits of the uniform even and odd subgraph measures on finite grids; it is easy to show that these two definitions are equivalent.  We mention a third equivalent construction: Assign signs $\{-,+\}$ independently and uniformly at random to the plaquettes of $\Z^2$, and declare an edge to be open if and only if its two adjacent plaquettes have the same sign.  This produces the uniform even subgraph, and one can obtain the uniform odd subgraph by taking the symmetric difference with any fixed ``reference'' perfect matching.  

It is obvious that $\UEG$ and $\UOG$ are both translation-invariant.  The third characterization makes it clear that they are both \emph{$2$-dependent}, in the sense that if $E_1, E_2$ are disjoint edge sets with no common vertices (i.e., the distance between $E_1,E_2$ is at least $2$), then events depending only on $E_1$ are independent from events depending only on $E_2$.  It follows that both measures are ergodic under the translation action of $\Z^2$, and in fact ergodic with respect to translations along any $1$-dimensional subgroup of $\Z^2$.\footnote{Indeed, for any $k \in \Z_{\geq 1}$, $k$-dependence implies mixing with respect to any $1$-dimensional subgroup of translations: Any two events can be approximated to arbitrary accuracy by cylinder sets each depending on only a finite set of edges, and these two edge sets can be moved arbitrarily far apart by applying a large enough translation.  Mixing is a stronger property than ergodicity.}  Because of the strict local degree conditions, these measures do not have finite-energy or satisfy the FKG inequality.

A final important property is that $\UEG, \UOG$ are both self-complementary (in fact this is the case on any even graph $G$).

\subsection{Previous work}
It is known \cite{GMM18,hansen2023uniform} that for all $d \geq 2$, the uniform even subgraph of $\Z^d$ percolates (has infinite clusters). The argument for $d\geq 3$, which is based on a comparison with Bernoulli-$1/2$ percolation in a hyperplane, also applies to the uniform odd subgraph. The only remaining open case is the odd subgraph on $\Z^2$, which is expected not to percolate (see \cite{464445} for representative pictures).

This last problem, posed in \cite{hansen2022strict}, was motivated by the study of the Kert\'esz line for the loop $O(1)$ model.  The loop $O(1)$ model is supported on the set of even subgraphs.  Adding a ``ghost vertex'' adjacent to all other vertices amounts to introducing the analogue of a magnetic field. Conditioning on all edges incident to the ghost vertex being open (corresponding to a strong magnetic field) imposes the condition that all remaining (internal) vertices have odd degrees. Non-percolation for the uniform odd subgraph of $\Z^2$ would highlight a difference in behavior between the Kert\'esz line for this model and the Kert\'esz line for the random-cluster model, which is monotone \cite{hansen2022strict}.


\subsection{Half-plane non-coexistence}

Our proof of Theorem~\ref{thm:main2} can be modified for the setting of the uniform odd subgraph of $\Z^2$, even though $\UOG$ does not have finite-energy.  We establish half-plane non-coexistence for the primal and the complement (rather than the primal and the dual); self-complementarity then implies that the uniform odd subgraph does not percolate in the half-plane.  The key point is the geometric/topological fact that the primal and the complement cannot ``cross'';\footnote{For general planar graphs, the relevant ``non-crossing'' condition is that at each vertex the open edges form an interval with respect to the cyclic ordering given by the planar embedding. The same principle explains why corner percolation on $\Z^2$ always consists of disjoint loops (or bi-infinite paths) \cite{pete2008corner}, and it was essential in the exploration of the random-current backbone in \cite{klausen2022mass}. } see Figure~\ref{fig:odd_cannot_cross}.
The primal and the complement can certainly cross in even subgraphs, and this difference in ``permeability'' explains why one would expect large clusters to coexist more easily in the uniform even subgraph than in the uniform odd subgraph.  It is shown in \cite{hansen2023uniform} (see the proof of Proposition 2.9 and Theorem 2.10 there) that $\UEG_{\hp}$ does percolate, and one can show that almost surely there is a unique infinite cluster.

\begin{figure}
\centering
\begin{tikzpicture}[
  scale=0.7,
  vertex/.style={circle,fill=black,inner sep=1.2pt},
  full/.style={line width=1.1pt},
  dashededge/.style={line width=1.1pt,densely dashed},
  faint/.style={gray!40,line width=0.6pt},
  lab/.style={font=\small}
]

\begin{scope}[shift={(0,0)}]

  \draw[faint] (-1,0) -- (1,0);
  \draw[faint] (0,-1) -- (0,1);

  \node[vertex] at (0,0) {};
  \node[vertex] at (1,0) {};
  \node[vertex] at (-1,0) {};
  \node[vertex] at (0,1) {};
  \node[vertex] at (0,-1) {};

  \draw[dashededge] (0,-1) -- (0,0) -- (0,1);

  \draw[full] (-1,0) -- (0,0) -- (1,0);


\end{scope}


\begin{scope}[shift={(5,0)}]

  \draw[faint] (-1,0) -- (1,0);
  \draw[faint] (0,-1) -- (0,1);

  \node[vertex] at (0,0) {};
  \node[vertex] at (1,0) {};
  \node[vertex] at (-1,0) {};
  \node[vertex] at (0,1) {};
  \node[vertex] at (0,-1) {};

  \draw[dashededge] (0,0) -- (1,0);

  \draw[full] (0,0) -- (0,1);
  \draw[full] (0,0) -- (0,-1);
  \draw[full] (0,0) -- (-1,0);

\end{scope}


\end{tikzpicture}
\caption{The open (solid edges) and complement (dashed edges) can ``cross'' only if they are arranged as on the left, where the central vertex has degree $2$.  There can be no such ``crossings'' in an odd subgraph, since each vertex is as depicted on the right. \label{fig:odd_cannot_cross}}
\end{figure}
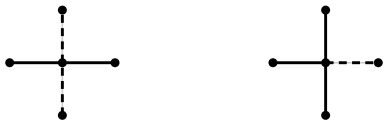

\begin{lemma}\label{lem:UOG-BK}
$\operatorname{UOG}$ almost surely has only finitely many infinite clusters. 
\end{lemma}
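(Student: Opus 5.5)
We will run the Burton--Keane trifurcation argument, with the failure of finite-energy repaired by a local modification that respects the parity constraints. Since $\UOG$ is translation-invariant and ergodic, the number of infinite clusters is almost surely a constant $N \in \Z_{\geq 0}\cup\{\infty\}$. We rule out $N=\infty$; we do not need to rule out $2 \le N <\infty$.

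Suppose $N=\infty$. By continuity of measure there is a box $B=[-n,n]^2$ that meets at least three infinite clusters with positive probability. Call a vertex $v$ a \emph{trifurcation} if it lies in an infinite cluster whose removal of $v$ splits into at least three infinite pieces. The standard counting argument then finishes the proof: trifurcations in a large box $\Lambda_m$ induce a forest-like structure, so there are at most $|\partial \Lambda_m|=O(m)$ of them. On the other hand, translation-invariance gives $\mathbb{E}[\#\text{trifurcations in }\Lambda_m]=c\,m^2$, with $c>0$ provided trifurcations occur with positive probability. This contradiction needs only translation-invariance, so all the work lies in producing a trifurcation with positive probability.

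The modification step uses the Haar/XOR structure instead of finite-energy. For any finite even subgraph $\eta$ of $\Z^2$, the map $\omega \mapsto \omega \triangle \eta$ preserves $\UOG$, because a uniform even subgraph is invariant under XOR with a fixed even subgraph. More usefully, $\UOG$ is the push-forward of i.i.d.\ plaquette signs, and flipping the signs in a finite set of plaquettes is absolutely continuous. Consequently, for any event $A$ of positive probability and any finite set $F$ of plaquettes, the set obtained from $A$ by arbitrarily resigning $F$ has positive probability. Resigning the plaquettes inside a slightly enlarged box $B'$ toggles exactly the edges of an even subgraph supported in $B'$, namely a symmetric difference of plaquette boundaries. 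The achievable configurations inside $B'$ are therefore exactly those $\omega|_{B'}\triangle \eta$ with $\eta$ even and supported in $B'$. In other words, inside $B'$ we may choose any configuration with the same vertex parities and the same boundary-edge values as before.

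The main obstacle is showing that such a parity-preserving modification can join three infinite clusters, each entering $B'$, at a single vertex while keeping them otherwise disjoint, so as to create a trifurcation. The constraint is that every interior vertex must keep odd degree, and the edges crossing $\partial B'$ are fixed. I would first condition on the event that three infinite clusters enter $B'$ through three boundary edges $e_1,e_2,e_3$ attached to three vertices $a_1,a_2,a_3$, chosen so that $a_1,a_2,a_3$ are well separated on $\partial B'$ (this is possible after enlarging $B$). Inside $B'$ I would then take disjoint lattice paths from $a_1,a_2,a_3$ to a central vertex $v$, together with a degree-3 tree at $v$. All remaining interior vertices would be filled in with a perfect matching or with small degree-1 and degree-3 gadgets, adjusted near the boundary so that the boundary vertices have the right parity given the fixed outside edges. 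This fill-in is a parity (T-join) problem, solvable because the parity-sum condition is inherited from the original configuration. I would also arrange that every other boundary edge entering $B'$ attaches only to finite gadgets that do not link distinct outside clusters. Once the interior configuration is fixed, the non-crossing property of odd subgraphs should guarantee that no extra connections exist outside, apart from the given clusters rejoining far away, which the counting argument tolerates after the usual adjustment. If this direct construction becomes messy, I would fall back on \Cref{lemma:iff_infinite}-style planar arguments instead: by self-complementarity, infinitely many primal infinite clusters would force infinitely many complement infinite clusters, and non-crossing would let me build a contradiction via ends, using the fact that clusters have at most $2$ ends.
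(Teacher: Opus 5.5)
Your framework is the right one. Your description of what resigning plaquettes gives you is also correct: inside a box you can reach exactly the configurations $\omega|_{B'}\triangle\eta$ with $\eta$ even and supported in $B'$, each with probability bounded below by a constant depending only on the box. The gap is that the one step needing real work, producing a trifurcation with positive probability despite the parity constraint, is only sketched, and the sketch does not hold up.

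\begin{itemize}
\item You cannot arrange the entry vertices $a_1,a_2,a_3$ to be well separated. The outside configuration is given, and enlarging the box gives no control over where, or next to what, the clusters hit its boundary.
\item This matters because of parity at the $a_i$. Each $a_i$ carries an open crossing edge, so its inside degree must be $0$ or $2$, and to attach $a_i$ to $v$ you must open a second inside edge at $a_i$. Suppose $a_1$ is the only entry point of $C_1$ and its two neighbours on $\partial B'$ are entry points of $C_2$ and $C_3$. Then every admissible choice joins $C_1$ directly to $C_2$ or $C_3$ at $a_1$, so three disjoint arms meeting at $v$ are impossible there. You would need a different design and a case analysis.
\item The remark about clusters ``rejoining far away'' is off. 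You chose distinct clusters of the outside configuration, so they cannot rejoin outside $B'$, and non-crossing plays no role. If they could rejoin, $v$ would simply not be a trifurcation.
\item Your fallback cannot work at all. Open the vertical edges in even columns and the horizontal edges from $(2j,y)$ to $(2j+1,y)$. Every vertex then has degree $1$ or $3$, and the complement is the translate by $(1,0)$. Averaging over this translate gives a translation-invariant, ergodic, self-complementary measure on odd subgraphs with infinitely many infinite clusters (two-ended combs) in both primal and complement. So planarity, non-crossing and ends alone cannot give a contradiction; some insertion property specific to $\UOG$ is indispensable. Also, \Cref{lemma:iff_infinite} concerns the planar dual, not the complement.
\end{itemize}

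The missing idea is that you do not need a trifurcation at a single vertex. The Burton--Keane volume count works just as well with \emph{coarse} trifurcation boxes: disjoint translates of a fixed box $B$ such that removing $B$ splits some cluster into at least three infinite pieces. For these the parity constraint costs nothing. Each edge on a side of $B$ borders exactly one plaquette inside $B$, so resigning the inside plaquettes along that side can make every edge of the side open. Parity is preserved automatically because the toggled set is even, and nothing outside $B$ changes.

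If $B$ touches at least $9$ infinite clusters of the configuration outside $B$ (which has positive probability for large $B$ when there are infinitely many infinite clusters), some side touches at least $3$ of them. Opening that side glues them, with conditional probability bounded below by a constant depending only on $B$. This is the paper's proof, and with your resigning observation already in hand it replaces the entire gadget construction.
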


\begin{proof}
We adapt the classical argument of Burton and Keane.  Their original argument uses finite-energy to construct so-called \emph{trifurcation points} in boxes.  A suitable substitute replaces trifurcation points with \emph{coarse trifurcation regions} (which were also used in \cite{aizenman2015random}); a sufficient statement is that if $B \subseteq \Z^2$ is a box that with positive probability touches at least $9$ infinite clusters of the marginal on $\Z^2 \setminus B$, then with positive probability some $3$ such clusters glue together when we reveal the subgraph on $B$.  Suppose that at least $9$ infinite clusters of $\Z^2 \setminus B$ touch $B$.  By the pigeonhole principle, some side of $B$ intersects at least $3$ of these clusters.  By flipping some subset of the plaquettes inside of $B$ touching this side, we find that with positive probability (depending only on the size of $B$) all of the edges on this side are open and in particular the clusters intersecting this side all glue together.  The Burton--Keane volume argument now demonstrates that almost surely $\UOG$ has only finitely many infinite clusters.
\end{proof}
We remark that one can easily adapt the rest of the Burton--Keane argument to deduce that $\UOG$ either almost surely has no infinite cluster, or almost surely has a unique infinite cluster which is moreover almost surely $1$-ended.

With the previous lemma in hand, we can easily modify the argument of Theorem~\ref{thm:main2}.  The argument is simpler here than in the setting of \Cref{proposion31} in the sense that we do not have to worry about our paths to infinity going ``below'' the origin plaquette, and it is more complicated in the sense that we have to take care with how the paths glue to the $x$-axis.

\begin{theorem}\label{thm:odd}
Almost surely $\UOG_{\hp}$ has no infinite cluster.
\end{theorem}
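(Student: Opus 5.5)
The plan is to run the two-step argument of Theorem~\ref{thm:main2} for $\UOG$, with the complement $\overline{H}$ of a sample $H$ (again a subgraph of $\Z^2$, not of the dual lattice) playing the role of $H^*$. By self-complementarity, $\overline{H}$ is again distributed as $\UOG$. The only place planar duality was used is the ``non-crossing'' of primal and dual paths. For odd subgraphs this is replaced by the fact illustrated in Figure~\ref{fig:odd_cannot_cross}: at every vertex the open edges (degree $1$ or $3$) form a cyclic interval, so a path in $H$ and a path in $\overline{H}$ never cross transversally.

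\textbf{Step 1: uniqueness in the half-plane.} Lemma~\ref{lem:UOG-BK} gives that $\UOG$ almost surely has finitely many infinite clusters, and the two-ended bound for translation-invariant percolation holds in general. The proofs of Lemma~\ref{lemma:every_cluster_is_everywhere} and Lemma~\ref{lemma:one_end_to_no_clust} then go through verbatim; their only planar input is that distinct primal clusters cannot cross, which is trivial. Hence $\UOG_{\hp}$ almost surely has at most one infinite cluster, and any infinite cluster touches the $x$-axis. Since $\UOG$ is ergodic even under horizontal translations, we need no ergodic decomposition. Proposition~\ref{lemma:uniqueness_implies_non_tenuousness} gives non-tenuousness, and the argument of Lemma~\ref{lem:a.s.constant} shows that the number of infinite clusters of $\UOG_{\hp}$ is almost surely $0$ or almost surely $1$. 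By self-complementarity, the same dichotomy holds for $\overline{H}$ restricted to the half-plane, with the \emph{same} alternative.

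\textbf{Step 2: non-coexistence.} Suppose for contradiction that almost surely both $H_{\hp}$ and $\overline{H}_{\hp}$ have unique infinite clusters $C$ and $\overline{C}$. By Lemma~\ref{lem:translation-invariant-events}, $C$ almost surely meets the $x$-axis at some $(x_1,0)$ with $x_1<0$ and at some $(x_2,0)$ with $x_2>0$. A path $P$ in $C$ joining these two points, together with the segment $[x_1,x_2]\times\{0\}$, bounds a finite region $R$ of the half-plane. By translation invariance, with positive probability the origin lies in $\overline{C}$ and is strictly between $x_1$ and $x_2$. Note that the origin is then not in $C$, since every edge at a vertex is open in exactly one of $H,\overline{H}$ and a vertex cannot lie in both clusters unless $\deg$ conditions fail; more precisely the origin can lie on both, and this is the subtlety below. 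An infinite path of $\overline{C}$ from the origin must leave $R$. It cannot cross $P$ transversally, by the non-crossing property. It cannot exit through the $x$-axis, since it stays in the half-plane. So it must leave $R$ by passing through a vertex of $P$ and touching that vertex tangentially.

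\textbf{Main obstacle.} The difficulty is exactly this tangential contact: primal and complement paths can share vertices, unlike primal and dual paths, which share nothing. This also includes how $P$ meets the $x$-axis at its endpoints, as the paper's remark warns. I would handle it by strengthening the blocking set. Instead of a single path, take the whole cluster $C$ restricted to the closure of $R$, and use the interval property at each vertex $v$ of $P$. The open edges at $v$ form a cyclic interval containing the two edges of $P$ at $v$. Any complement edges at $v$ therefore all lie on one side of $P$, namely the side not containing the interval of primal edges. So a complement path through $v$ enters and leaves on the same side of $P$ and cannot switch sides.

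The remaining care is at the endpoints $(x_i,0)$. There I would choose $x_1$ and $x_2$ such that the boundary vertex has all its half-plane edges in $H$. This is a positive-probability local event: modify plaquettes near the axis, using $2$-dependence to retain positive probability jointly with $(x_i,0)\cc\infty$ via a finite-energy-type flip argument as in Lemma~\ref{lem:UOG-BK}. By Lemma~\ref{lem:translation-invariant-events} such good points exist on both sides almost surely. Then $\overline{C}$ is trapped in $R$, contradicting its infiniteness. So the alternative for both $H$ and $\overline{H}$ must be ``almost surely no infinite cluster,'' which proves the theorem.
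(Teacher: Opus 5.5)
Your strategy is the paper's: get the $0$/$1$ dichotomy from Lemma~\ref{lem:UOG-BK} and Lemma~\ref{lem:a.s.constant}, use self-complementarity, and trap the complement cluster of the origin inside the loop formed by a primal path $P$ from $(x_1,0)$ to $(x_2,0)$ and the $x$-axis. Your side-switching analysis is correct but can be sharpened. A vertex carrying two primal edges has primal degree $3$ in $\Z^2$, hence at most one complement edge, so a complement path cannot pass through any interior vertex of $P$ at all. Two steps do not hold as written, though.

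The first is the endpoint condition. You require all three half-plane edges at $(x_i,0)$ to lie in $H$ and appeal to plaquette flipping. For the two horizontal edges this works exactly. Each $x$-axis edge is toggled by the plaquette just below it, which touches no other half-plane edge. So in $\UOG_{\hp}$ these edges are i.i.d.\ fair coins, independent of all other half-plane edges; this, not $2$-dependence, is what the paper uses. The vertical edge at $(x_i,0)$, however, can only be toggled by flipping an upper plaquette. That flip also toggles edges at height $1$ and may disconnect $(x_i,0)$ from infinity, so your justification does not cover it. It is also unnecessary: once both horizontal edges are open, $(x_i,0)$ already has primal degree $3$, so no complement path passes through it. That is exactly the paper's condition.

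The second and more serious problem is the case you flag yourself: the origin lying on $P$. This is never resolved. Your side-switching argument controls complement paths that \emph{pass through} a vertex of $P$. But if $P$ runs through the origin, say along the $x$-axis using both horizontal edges there, the origin's single complement edge may point upward, away from $R$, and nothing traps the complement cluster. Your claim that an infinite complement path from the origin ``must leave $R$'' presupposes that it starts inside $R$. (The paper's one-sentence blocking argument does not spell this case out either.)

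A clean repair uses the same independence. Closing the two horizontal edges at the origin cannot disconnect it from infinity in the complement. Hence with positive probability the origin lies in $\overline{C}$ \emph{and} both of these edges are closed. Then the origin has at least two complement edges. Every vertex of $P$, endpoints included, has at most one, so the origin is not on $P$. It therefore lies strictly inside the Jordan curve formed by $P$ and an arc in the lower half-plane. Every later vertex of a self-avoiding complement path from the origin also has two complement edges, so that path never meets $P$ and stays bounded. With this repair and the weakened endpoint condition, your argument goes through.
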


\begin{proof}
Lemmas~\ref{lem:UOG-BK} and~\ref{lem:a.s.constant} together imply that the number of infinite clusters in $\UOG_{\hp}$ is either almost surely $0$ or almost surely $1$.  We will show that if we assume that $\UOG_{\hp}$ almost surely has a unique infinite cluster, then almost surely its complement has no infinite cluster; since $\UOG_{\hp}$ is self-complementary, this will complete the proof that $\UOG_{\hp}$ almost surely has no infinite cluster.

Assume that $\UOG_{\hp}$ almost surely has a unique infinite cluster.  It suffices to show that almost surely the origin is not contained in an infinite cluster in the complement.  The argument from the first paragraph of the proof of Proposition~\ref{proposion31} shows that the origin is contained in an infinite (primal) cluster with positive probability.  Flipping the plaquettes centered at $(-1/2,-1/2)$ and $(1/2,-1/2)$ toggles the horizontal edges incident to the origin, and the states of these edges are independent of all other edges in $\UOG_{\hp}$ (and of one another).  Since opening these edges cannot disconnect the origin from infinity, we see that with positive probability the origin is contained in an infinite cluster and both of the horizontal edges incident to the origin are open.  By Lemma~\ref{lem:translation-invariant-events}, almost surely there are some $x_1<0$ and $x_2>0$ such that each $(x_i,0)$ is contained in the unique infinite cluster and both of the horizontal edges incident to each $(x_i,0)$ are open.   Thus there is a path from $(x_1,0)$ to $(x_2,0)$, and this path together with the $x$-axis form a loop that prevents the origin from escaping to infinity in the complement.  (The extra horizontal edges incident to $(x_i,0)$ ensure that the complement cannot ``leak out'' at the boundary between the $x$-axis and the path from $(x_1,0)$ to $(x_2,0)$.)
\end{proof}

\section{Concluding remarks} \label{sec:conclusion}

So far we have been concerned with non-coexistence phenomena for bond percolation in the half-plane.  We remark, as an aside, that our methods should also have ramifications for site percolation; we leave this as a topic for future work.  The purpose of this final section is to present several questions and examples about qualitative differences between the half-plane and full-plane settings.  A great variety of counterintuitive behavior turns out to be possible in the full plane, which makes it all the more remarkable that the half-plane admits simple true statements like Theorems~\ref{thm:main} and~\ref{thm:main2}.

The authors of \cite{haggstrom2009some} describe an intuition based on \cite{grimmett1981critical} for why (in contrast with Theorem~\ref{thm:main}) translation-invariance and finite-energy are not sufficient for non-coexistence in the full plane: One could imagine a one-ended spiral, similar to the uniform spanning tree, where the thickness of the branches grows fast enough to survive iid thinning (the source of finite-energy).  The ``backbone'' of such a spiral would cross every line infinitely often, so the marginal on each half-plane would have only finite clusters in both the primal and the dual.  It is conceivable that examples of this sort are the main (perhaps only) reason that one could have non-coexistence in half-planes but coexistence in the full plane.


\begin{question}\label{conj:k-dep}
Is it true that if $\mu$ is a translation-invariant, ergodic, finite-energy, $k$-dependent edge percolation measure on $\Z^2$ (for some $k \in \mathbb{Z}_{\geq 1}$), then the trichotomy of Theorem~\ref{thm:main} holds in the full plane?
\end{question}

The following example of the random walk web shows that the finite-energy hypothesis in this question cannot be removed.  The random web walk was first introduced by Scheidegger \cite{SCHEIDEGGER01031967} and Arratia \cite{Arratia1981Coalescing} and later shown to be related to the self-repelling motion \cite{toth_true_1998} and the Brownian web \cite{fontes2004brownian}. A sample is shown in \Cref{fig:full_plane_examples}.

\begin{figure}
    \centering
    \begin{subfigure}[b]{0.48\linewidth}
        \centering
        \includegraphics[width=\linewidth]{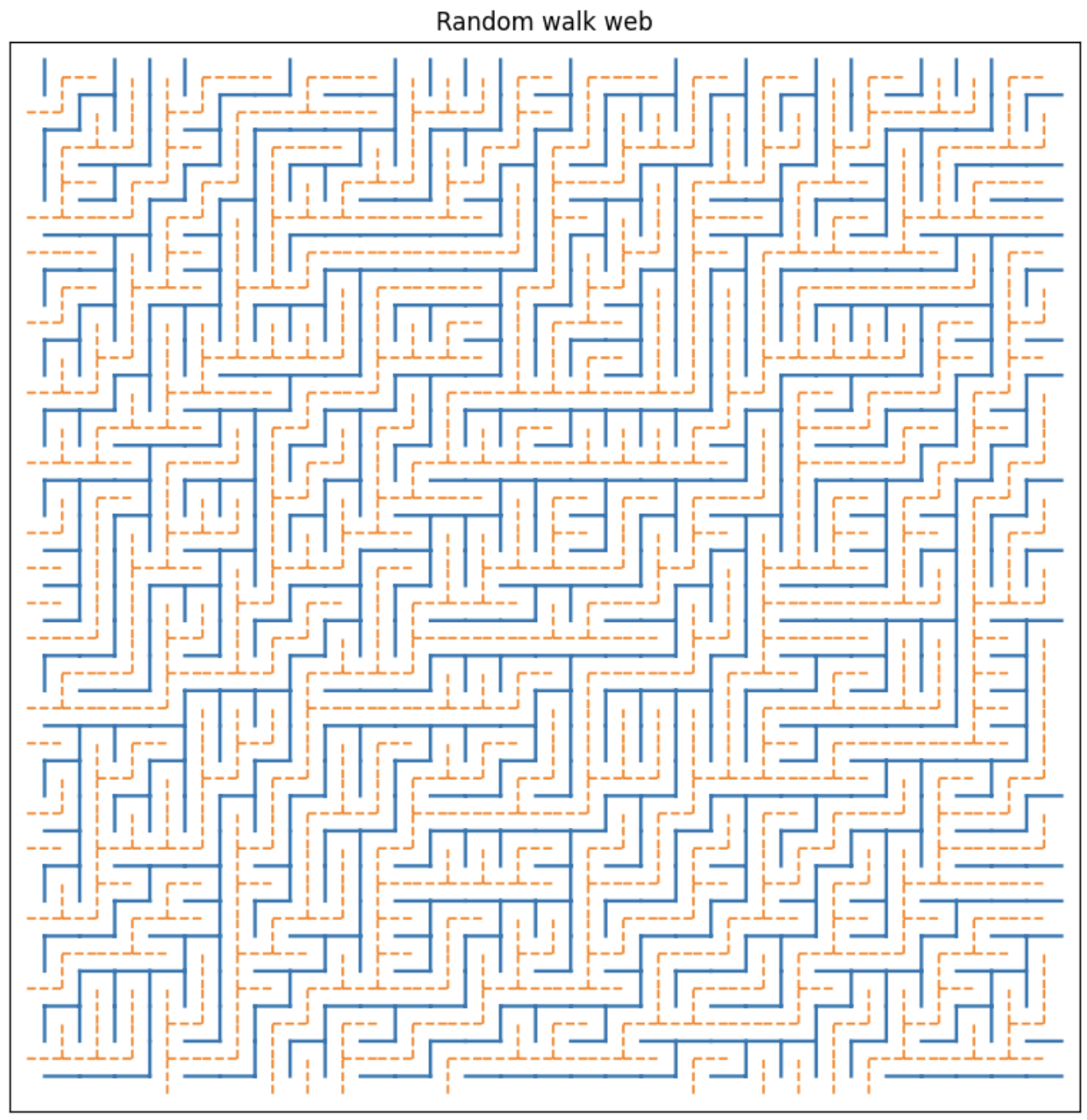}
        \caption{The random walk web discussed in \Cref{ex:random_walk_web}. This model is 2-dependent and has coexistence of primal and dual infinite clusters.}
        \label{fig:random_walk_web_a}
    \end{subfigure}
    \hfill
    \begin{subfigure}[b]{0.48\linewidth}
        \centering
        \includegraphics[width=\linewidth]{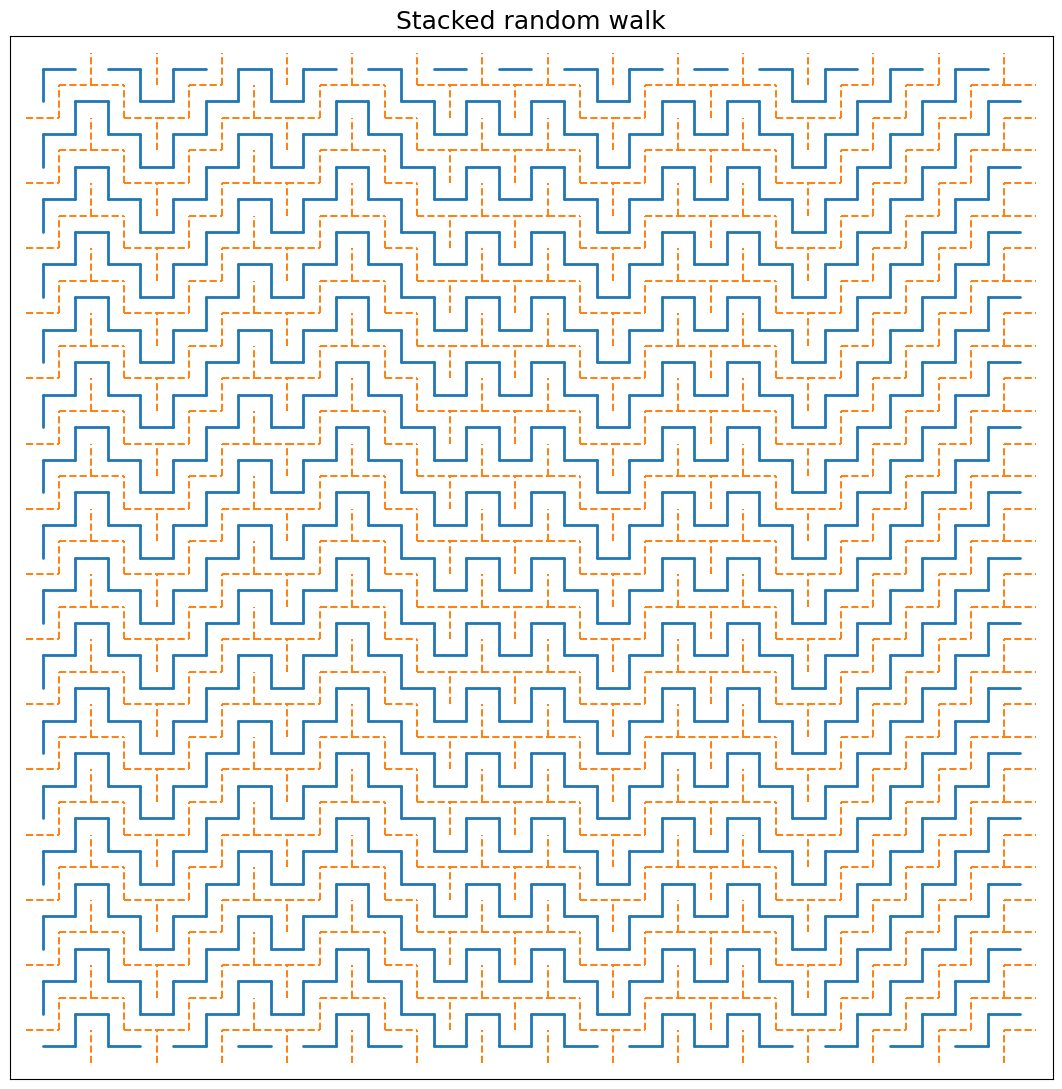}
        \caption{The stacked random walk defined in \Cref{ex:stacked_random_walks}. It has infinitely many infinite clusters in the full plane but none in the upper half-plane.}
        \label{fig:random_walk_web_b}
    \end{subfigure}
    \caption{Samples from two examples of non-finite energy percolation models in the full plane. }
    \label{fig:full_plane_examples}
\end{figure}

\begin{example}[Random walk web]\label{ex:random_walk_web}
To sample a subgraph $H$ from the random web walk, first sample a uniformly random iid array $(\epsilon_{x,y})_{(x,y) \in \mathbb{Z}^2} \in \{-1,1\}^{\mathbb{Z}^2}$.  For each $(x,y) \in \mathbb{Z}^2$, open the vertical edge from $(x,y)$ to $(x,y+1)$ in $H$ if $\epsilon_{x,y}=1$, and open the horizontal edge from $(x,y)$ to $(x,y+1)$ in $H$ if $\epsilon_{x,y}=-1$.

The resulting measure is $1$-dependent, and it is known that $H$ is almost surely a spanning tree (and in particular has a unique infinite cluster in the full plane).  It follows that the same holds for the dual $H^*$.  
\end{example}

With a view towards applications like the antiferromagnetic Ising model, one might hope to replace the $k$-dependence condition in Question~\ref{conj:k-dep} with the domain Markov property.

In a different direction, Lemma~\ref{lemma:one_end_to_no_clust} seems potentially useful for extending half-plane non-coexistence results to other models.  One that comes to mind is the arboreal gas, whose dual is Bernoulli percolation conditioned to be connected.  The only known proofs of non-percolation for the arboreal gas in the full plane rely on the complicated supersymmetry methods of \cite{Bauerschmidt2021PlaneForest}.  There is a non-supersymmetric proof \cite{halberstam2024uniqueness} that any infinite cluster must be unique and $1$-ended (of course there is no such cluster by \cite{Bauerschmidt2021PlaneForest}), and this can be used as input to \Cref{thm:main2} for further supersymmetry-free results.  
Another promising topic for future work in this direction is the edge-weighted planar odd graph.   It would also be natural to try to relax the conditions of Lemma~\ref{lemma:one_end_to_no_clust}.
\begin{question}
Is it true that any translation-invariant, $k$-dependent percolation measure on $\Z^2$ with single edge marginals in $(0,1)$ almost surely has at most $1$ infinite cluster?
\end{question}
The work \cite[Theorem 0.0]{liggett1997domination} provides some $0<p(k)<1$ such that if the edge marginals are larger than $p(k)$, then the process dominates Bernoulli percolation at $p=0.51$, whence uniqueness follows.

Finally, we revisit the hypothesis in \Cref{thm:main2} that $\mu$ almost surely has only finitely many infinite clusters in the full plane.  This hypothesis is certainly necessary for half-plane non-coexistence, since for the simple example of the measure $\mu$ which deterministically has all vertical edges open and all horizontal edges closed, infinitely many infinite clusters survive in both $\mu_{\hp}$ and $ \mu^*_{\hp}$.  At the same time, the following example, which we found using ChatGPT 5.5 Pro, shows that it is possible for $\mu$ to almost surely have infinitely many infinite clusters and for $\mu_{\hp}$ and $ \mu^*_{\hp}$ to almost surely have only finite clusters. A sample from the measure is shown in \Cref{fig:full_plane_examples}.

\begin{example}[Stacked random walk]\label{ex:stacked_random_walks}
We define an edge percolation $\mu$ on $\mathbb{Z}^2$ as follows.  To sample a subgraph $H$ from $\mu$, first sample a uniformly random iid sequence $(\epsilon_n)_{n \in \mathbb{Z}} \in \{-1,1\}^{\mathbb{Z}}$.  Define the function $f: \mathbb{Z} \to \mathbb{Z}$ by sampling $f(0) \in \{0,1\}$ uniformly at random and then extending it to satisfy $f(n+1)=f(n)+\epsilon_n$ for all $n \in \mathbb{Z}$ (so $f(n)$ is a $2$-sided unbiased random walk starting from $f(0)$).  Now for each choice of $n,m \in \mathbb{Z}$, open the vertical edge from $(n,f(n)+2m)$ to $(n,f(n+1)+2m)$ and the horizontal edge from $(n,f(n+1)+2m)$ to $(n+1,f(n+1)+2m)$ in $H$.

Notice that $H$ is always the disjoint union of infinitely many bi-infinite paths (spaced apart vertically at distance $2$).  In the upper half-plane, however, $H$ almost surely has only finite clusters, since almost surely $\liminf_{n \to \infty} f(n)=\liminf_{n \to -\infty}f(n)=-\infty$.
\end{example}

It would be interesting to construct a version of this example where the marginal on \emph{each} half-plane (not only the upper and lower half-planes) almost surely has only finite clusters.

\appendix
\section{Using finite-energy to eliminate tenuous clusters}\label{sec:appendix}

In Lemma~\ref{lemma:one_end_to_no_clust} and Proposition~\ref{lemma:uniqueness_implies_non_tenuousness}, we used topological arguments to show that if $\mu$ is a translation-invariant edge percolation measure that almost surely has finitely many infinite clusters, then almost surely $\mu_{\hp}$ has no tenuous infinite cluster.  In this appendix we provide a more ``combinatorial'' proof that finite-energy alone is sufficient condition to rule out tenuous infinite clusters. This argument can be used to prove \Cref{thm:main} without  Lemma~\ref{lemma:one_end_to_no_clust}.   We suspect that the proof ideas may be of use for related problems.

\begin{lemma}\label{lem:tenuous}
Let $\mu$ be a finite-energy edge percolation measure on $\Z^2$.  Then almost surely $\mu_{\hp}$ has no tenuous infinite cluster.
\end{lemma}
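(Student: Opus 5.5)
The plan is a Borel--Cantelli argument powered by finite-energy. We never use translation-invariance, only finite-energy and the geometry of tenuous clusters. Fix $n$ and a vertex $v$ in the strip $\Z \times [0,n)$. By a countable union bound it suffices to show that almost surely $v$ does not lie in an $n$-tenuous infinite cluster $C$ of $\mu_{\hp}$.

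First we extract the geometry. If $C$ is $n$-tenuous, every component of $C \cap (\Z \times \Z_{\geq n})$ is finite and is attached to the strip through vertical edges crossing height $n-1/2$. Hence $C$ is infinite only because it extends horizontally to infinity. So $C$ meets the strip at arbitrarily large $|x|$, and without loss of generality (another union bound) at arbitrarily large positive $x$.

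The structural claim I would aim for is the following. There is a deterministic $h=h(n)$, or at worst a random $h$ that is a.s.\ finite, such that for infinitely many $k$ there is a set $K_k$ of at most $h$ open edges near column $k$ with this property: closing $K_k$ separates $v$ from every vertex of $C$ with $x$-coordinate larger than $k+h$. The idea is to cut the strip part of $C$ at column $k$ by closing the horizontal edges there at heights $<n$. Then, for each of the finitely many (here is the point) finite upper components straddling column $k$, close their attaching vertical edges on one side. I would first try to show that only finitely many upper components straddle a given column, with boundedly many attaching edges for infinitely many $k$, by a pigeonhole argument over the a.s.\ finite quantity ``height of the excursion covering column $k$.'' I would also truncate to an event $G_k$ on which the total cut size is at most $h$.

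Next comes the finite-energy comparison. Let $E_k$ be the event that the cluster of $v$ in $\mu_{\hp}$ is finite and its maximal $x$-coordinate lies in $[k, k+h]$. For fixed $h$, each configuration is counted in at most $h+1$ of the events $E_k$, so $\sum_k \mu(E_k) \le h+1$. Consider the map that closes the edges of $K_k$ (chosen measurably, e.g.\ as the lexicographically first valid cut). It sends $\{v \in C\} \cap G_k$ into $E_k$, and it is at most $2^{O(h^2)}$-to-one, since $K_k$ lies in a window of bounded size. Finite-energy, applied edge by edge, gives
$$\mu(\{v\in C\}\cap G_k) \le 2^{O(h^2)}\,\delta^{-h}\,\mu(E_k).$$
The left-hand side is therefore summable in $k$. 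By Borel--Cantelli, almost surely only finitely many $G_k$ occur together with $v \in C$. This contradicts the structural claim unless $\mu(v \text{ in an } n\text{-tenuous infinite cluster})=0$. Letting $h\to\infty$ along a sequence handles a random bound $h$.

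I expect the main obstacle to be the structural claim. A priori, column $k$ could meet infinitely many distinct finite upper components of $C$, or a single upper component with very many attaching edges, so a bounded cut need not exist. If the pigeonhole approach fails, I would weaken the cut: close only the at most $n$ horizontal strip edges at column $k$, and argue that the finite upper components bridging column $k$ are themselves shielded. Their union with the strip must still be crossed by a dual path. I would then choose $k$ among the a.s.\ infinitely many columns where no upper component of $C$ straddles $k$; finiteness of the components is what should make such columns exist.
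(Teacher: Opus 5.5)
Your finite-energy comparison is fine in principle: closing at most $h$ edges in a bounded window costs a factor $2^{O(h^2)}\delta^{-h}$, and $\sum_k\mu(E_k)\le h+1$. The problem is that everything rests on the structural claim, and the pathwise arguments you suggest for it cannot work, because there are tenuous infinite clusters for which the claim fails. This applies both to the pigeonhole over straddling excursions and to unstraddled columns existing because the upper components are finite.

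For instance, take the strip $\Z\times[0,n)$ fully open. In $\Z\times\Z_{\geq n}$, put nested finite arches $U_1,U_2,\dots$ with boundary contacts $a_j\to-\infty$ and $b_j\to+\infty$. Every column is straddled by infinitely many components of $C$. For large $k$, the routes $v\to(a_j,n)\to U_j\to(b_j,n)$ cross the line $x=k+\tfrac12$ only inside $U_j$, at heights tending to infinity. So no bounded edge set near column $k$ separates $v$ from the far right of $C$.

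Nor is this a corner case. Since distinct components cannot cross, if every column to the right of some $X_0$ is straddled, you are forced into exactly this nested picture. Ruling it out is a genuinely probabilistic step, and it needs a \emph{two-sided} cut: close the $n$ strip edges below each of the two feet of the dual boundary of some $U_j$ (conditional cost $\delta^{2n}$). This traps $v$ in a finite region. A single-column cut on the right, with $E_k$ recording only the right end, cannot capture this. Separately, $E_k$ should not require finiteness, since $C$ may also extend to $-\infty$, so a right cut alone need not make $v$'s cluster finite.

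This two-sidedness is exactly what the paper's exploration builds in. It first reveals all of $\Z\times\Z_{\geq n}$. It then repeatedly reveals the strip inside the current horizontal extent $[m_i^-,m_i^+]$ of the explored cluster, and then the two columns at the new extremes $m_{i+1}^\pm$. These extremes may be reached either through strip edges or through arches. Closing both columns seals the cluster off in either case, since planarity keeps arches attached in the newly revealed rectangles nested under the arch realizing the extreme. Finite energy makes this happen with conditional probability at least some $\gamma=\gamma(\delta,n)>0$ at every step, regardless of the past, so the exploration halts almost surely.

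If you want to keep your Borel--Cantelli framework, you would need two things. First, the dichotomy above: either infinitely many unstraddled columns on the relevant side, or an infinite nested family of arches over $v$. Second, a two-index count over events $E_{k,k'}$ recording both extremes of $v$'s cluster, using $\sum_{k,k'}\mu(E_{k,k'})\le(h+1)^2$.
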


\begin{proof}
An $n$-tenuous infinite cluster necessarily contains some vertex with $y$-coordinate at most $n-1$.  By a union bound (and the fact that $n$-tenuousness implies $n'$-tenuousness for $n'>n$) it suffices to show that each vertex $(\ell,n-1) \in \Z \times \Z_{\geq 0}$ is almost surely not contained in an $n$-tenuous infinite cluster.  For ease of notation we prove this only for $\ell=0$.  Fix some $n \geq 1$.  The rough idea is that if $(0,n-1)$ is in an $n$-tenuous infinite cluster, then there must be infinitely many coordinates $m$ such that ``cutting'' along the line segment $\{m\} \times [0,n]$ disconnects $(0,n-1)$ from infinity; we will use the  finite-energy hypothesis to show that almost surely there can be only finitely many such cut-coordinates.

Sample a subgraph $H$ from $\mu_{\hp}$, and reveal it according to the following procedure.  Begin by revealing $H$ on
$$(\Z \times \Z_{\geq n}) \cup ((-1,1) \times [0,n)),$$
and let $C_0$ denote the cluster containing $(0,n-1)$ in the subgraph of $H$ revealed thus far.  If $C_0$ is infinite, then certainly the cluster in $H$ containing $(0,n-1)$ is not $n$-tenuous.  Also, if $C_0$ is finite and touches neither of the half-strips $(-\infty, -1] \times [0,n], ~[1,\infty) \times [0,n]$, then the cluster in $H$ containing $(0,n-1)$ is necessarily finite and thus not $n$-tenuous.  If either of these two outcomes occurs, then halt the procedure.

Suppose neither of the last two outcomes occurs.  Let $m^+_1$ denote the largest $m\geq 0$ such that $C_0$ touches the segment $\{m\} \times [0,n]$, and let $m^-_1$ denote the smallest (i.e., most negative) $m\leq 0$ such that $C_0$ touches the segment $\{m\} \times [0,n]$.  Our assumptions ensure that
$$0<m^+_1+(-m^-_1)<\infty.$$
Reveal $H$ on the two rectangles
$$(m^-_1,-1] \times [0,n), \quad [1,m^+_1) \times [0,n),$$
and then reveal $H$ on the two columns
$$(m^-_1-1,m^-_1] \times [0,n), \quad [m^+_1,m^+_1+1) \times [0,n).$$
Since $m^+_1,m^-_1$ are not both $0$, at least one of these two columns was not previously revealed.  Let  $C_1$ denote the cluster containing $(0,n-1)$ in the subgraph of $H$ revealed thus far.  As before, if $C_1$ is infinite or touches neither of the half-strips $(-\infty, -m^-_1-1] \times [0,n],~ [m^+_1+1,\infty) \times [0,n]$, then the cluster in $H$ containing $(0,n-1)$ is necessarily not $n$-tenuous, and we halt the procedure.

\begin{figure}[t]
    \centering
\begin{tikzpicture}[scale=0.7]

\def\n{4}  
\def\mplus{4}  
\def\mminus{-3}  
\def\xmin{-6}
\def\xmax{7}
\def\ymax{7}

\def\ymin{-0.1}
\def\ytop{\ymax+0.5}
\def\xleft{\xmin-0.5}
\def\xright{\xmax+0.5}


\fill[blue!8] (\xmin-0.5, \n-0.1) rectangle (\xmax+0.5, \ymax+0.5);

\fill[blue!8] (\mminus-0.5, -0.1) rectangle (\mplus+0.5, \n);

\fill[yellow!15] (\xmin-0.5, -0.1) rectangle (\mminus-0.5, \n-0.1);
\fill[yellow!15] (\mplus+0.5, -0.1) rectangle (\xmax+0.5, \n-0.1);


\foreach \x in {\xmin,...,\xmax} {

    \draw[gray!30] (\x, \n) -- (\x, \ytop);

    \pgfmathparse{(\x >= \mminus && \x <= \mplus) ? 1 : 0}
    \ifnum\pgfmathresult=1
        \draw[gray!30] (\x, \ymin) -- (\x, \n);
    \else
        \draw[gray!15] (\x, \ymin) -- (\x, \n);
    \fi
}

\foreach \y in {0,...,\ymax} {
    \ifnum\y<\n
        \draw[gray!15] (\xleft, \y) -- (\mminus, \y);
        \draw[gray!30] (\mminus, \y) -- (\mplus, \y);
        \draw[gray!15] (\mplus, \y) -- (\xright, \y);
    \else
        \draw[gray!30] (\xleft, \y) -- (\xright, \y);
    \fi
}


\fill[orange!30] (\mplus-0.1, -0.1) rectangle (\mplus+1-0.1, \n-0.1);
\foreach \y in {1,...,\n} {
    \draw[orange, line width=2pt] (\mplus, \y-1) -- (\mplus+1, \y-1);
    \draw[orange, line width=2pt] (\mplus, \y-1) -- (\mplus, \y);
}

\fill[orange!30] (\mminus-1+0.1, -0.1) rectangle (\mminus+0.1, \n-0.1);
\foreach \y in {1,...,\n} {
    \draw[orange, line width=2pt] (\mminus-1, \y-1) -- (\mminus, \y-1);
    \draw[orange, line width=2pt] (\mminus, \y-1) -- (\mminus, \y);
}



\draw[red!70!black, line width=2.5pt] (0,\n-1) -- (0,\n);
\draw[red!70!black, line width=2.5pt] (0,\n-1) -- (0,\n-2);
\draw[red!70!black, line width=2.5pt] (0,\n-2) -- (0,\n-3);

\draw[red!70!black, line width=2.5pt] (0,\n) -- (-1,\n);
\draw[red!70!black, line width=2.5pt] (-1,\n) -- (-1,\n+1);
\draw[red!70!black, line width=2.5pt] (-1,\n+1) -- (-2,\n+1);
\draw[red!70!black, line width=2.5pt] (-2,\n+1) -- (-2,\n+2);
\draw[red!70!black, line width=2.5pt] (-2,\n+2) -- (-3,\n+2);
\draw[red!70!black, line width=2.5pt] (-3,\n+2) -- (-3,\n+1);
\draw[red!70!black, line width=2.5pt] (-3,\n+1) -- (-3,\n);

\draw[red!70!black, line width=2.5pt] (0,\n) -- (1,\n);
\draw[red!70!black, line width=2.5pt] (1,\n) -- (1,\n+1);
\draw[red!70!black, line width=2.5pt] (1,\n+1) -- (2,\n+1);
\draw[red!70!black, line width=2.5pt] (2,\n+1) -- (2,\n);
\draw[red!70!black, line width=2.5pt] (2,\n) -- (2,\n-1);
\draw[red!70!black, line width=2.5pt] (0,\n-2) -- (0,1);
\draw[red!70!black, line width=2.5pt] (0,1) -- (1,1);
\draw[red!70!black, line width=2.5pt] (1,1) -- (2,1);
\draw[red!70!black, line width=2.5pt] (2,1) -- (3,1);
\draw[red!70!black, line width=2.5pt] (3,1) -- (4,1);
\draw[red!70!black, line width=2.5pt] (0,2) -- (-1,2);


\foreach \x in {\xmin,...,\xmax} {
    \foreach \y in {0,...,\ymax} {
        \fill[black!50] (\x,\y) circle (1.5pt);
    }
}

\fill[red!70!black] (0,\n-1) circle (4pt);
\node[below left, red!70!black] at (0,\n-1) {$(0,n{-}1)$};

\foreach \pos in {(0,\n), (0,\n-2), (0,\n-3), (-1,\n), (-1,\n+1), (-2,\n+1), (-2,\n+2), (-3,\n+2), (-3,\n+1), (-3,\n)} {
    \fill[red!70!black] \pos circle (3pt);
}
\foreach \pos in {(1,\n), (1,\n+1), (2,\n+1), (2,\n), (2,\n-1), (0,1), (1,1), (2,1), (3,1), (4,1),(-1,2)} {
    \fill[red!70!black] \pos circle (3pt);
}


\fill[blue!70!black] (\mminus,\n) circle (4pt);
\fill[blue!70!black] (\mplus,1) circle (4pt);


\draw[blue!50!black, dashed, line width=1pt] (\xmin-0.5, \n) -- (\xmax+0.5, \n);
\node[left, blue!50!black] at (\xmin-0.6, \n) {$y = n$};

\draw[black!50, dashed] (\xmin-0.5, 0) -- (\xmax+0.5, 0);
\node[left, black!50] at (\xmin-0.6, 0) {$y = 0$};

\draw[<-, >=Stealth, thick, orange!70!black] (\mplus, -0.4) -- (\mplus, -1.1);
\node[below, orange!70!black] at (\mplus, -1.1) {$m_i^+$};

\draw[<-, >=Stealth, thick, orange!70!black] (\mminus, -0.4) -- (\mminus, -1.1);
\node[below, orange!70!black] at (\mminus, -1.1) {$m_i^-$};


\node[red!70!black, align=center] at (-4.1, \n+2.4) {$C_{i-1}$};
\draw[->, >=Stealth, red!70!black] (-3.5, \n+2.2) -- (-3.2, \n+1.6);

\end{tikzpicture}
    \caption{One step in the exploration of the cluster containing the vertex $(0,n-1)$.  The blue region has been previously revealed, and the yellow region is still unexplored.  If all of the orange edges are closed, then the cluster gets ``cut off'' and must be finite.}
    \label{fig:exploration}
\end{figure}
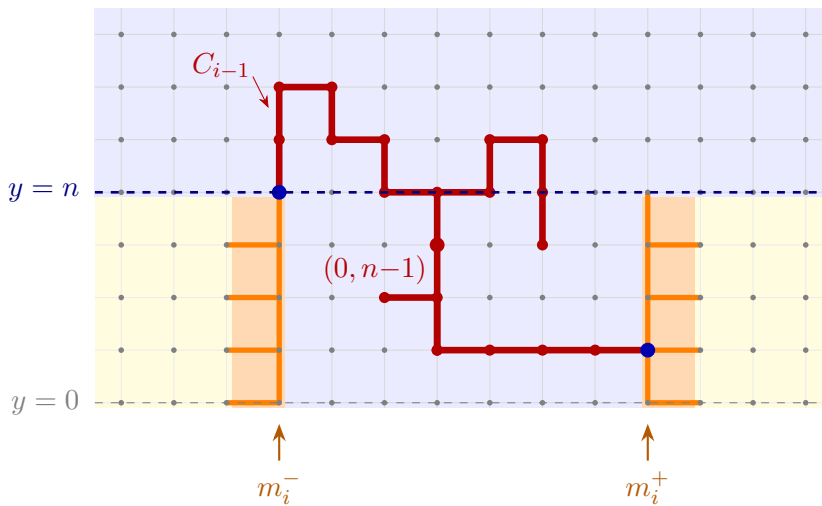

We continue in the same manner to obtain a sequence of triples $(m^+_i, m^-_i, C_i)$.  For the general step, suppose that we have already obtained $m^-_i \leq 0 \leq m^+_i$, revealed $H$ on
$$(\Z \times \Z_{\geq n}) \cup ((m^-_i-1,m^+_i+1) \times [0,n)),$$
and set $C_i$ to be the cluster containing $(0,n-1)$ in the subgraph of $H$ revealed thus far.  Say that the index $i$ is \emph{halting} if $C_i$ is infinite or touches neither of the half-strips $(-\infty, -m^-_i-1] \times [0,n],~ [m^+_i+1,\infty) \times [0,n]$.  As the name suggests, we halt the procedure at the index $i$ if it is halting.  If the index $i$ is not halting, then we obtain $(m^+_{i+1}, m^-_{i+1},C_{i+1})$ as follows.  Let $m^+_{i+1}$ denote the largest $m\geq 0$ such that $C_i$ touches the line segment $\{m\} \times [0,n]$, and let $m^-_{i+1}$ denote the smallest $m\leq 0$ such that $C_i$ touches the line segment $\{m\} \times [0,n]$.  Our non-halting assumption ensures that
\begin{equation}\label{eq:sum-of-m's-increase}
m^+_i+(-m^-_i)<m^+_{i+1}+(-m^-_{i+1})<\infty.
\end{equation}
Reveal $H$ on the two rectangles
$$(m^-_{i+1},m^-_i-1] \times [0,n), \quad [m^+_i+1,m^+_{i+1}) \times [0,n),$$
and then reveal $H$ on the two columns
\begin{equation}\label{eq:columns}
(m^-_{i+1}-1,m^-_{i+1}] \times [0,n), \quad [m^+_{i+1},m^+_{i+1}+1) \times [0,n).
\end{equation}
Notice from \eqref{eq:sum-of-m's-increase} that at least one of these two columns was not previously revealed.   Let  $C_{i+1}$ denote the cluster containing $(0,n-1)$ in the subgraph of $H$ revealed thus far.  See Figure~\ref{fig:exploration} for an illustration.

The  finite-energy assumption provides some $\gamma>0$ (depending only on $\mu,n$; and independent of $i$) such that almost surely given what has been previously revealed, when we reveal $H$ on \eqref{eq:columns}, with probability at least $\gamma$ all of these edges are closed.  It follows that with probability at least $\gamma$, the index $i$ is halting.  Thus the probability that the indices $1,2,\ldots, j$ are all non-halting is at most $(1-\gamma)^j$.  Since this quantity tends to $0$ as $j$ tends to infinity, we conclude that the procedure almost surely halts. Thus the cluster in $H$ containing $(0,n-1)$ is almost surely not $n$-tenuous, as desired.
\end{proof}

\section*{Acknowledgments}
We thank Omer Angel, Geoffrey Grimmett, Renan Gross, Ron Peled, and Ulrik Thinggaard Hansen for helpful conversations.  FRK was supported by the Carlsberg Foundation, CF24-0466.  NK was supported in part by a NSF Mathematical Sciences Postdoctoral Research Fellowship under grant DMS-2501336.

\bibliographystyle{abbrv}
\bibliography{bibliography}

\end{document}